\newtheorem{theorem}{Theorem}
\newtheorem{claim}{Claim}[theorem]
\theoremstyle{definition}
\newtheorem*{conjecture*}{Wegner's Conjecture}
\newcommand{\wei}{{\mathfrak{w}}}
\newcommand{\rightstar}[2]
{
	\draw[gray] ($#1+(-30:0.6)$) -- #1 -- ($#1+(30:0.6)$);
	\node[gray] at ($#1+(0.5,0.1)$) {$\vdots$};
}
\newcommand{\leftstar}[2]
{
	\draw[gray] ($#1+(150:0.6)$) -- #1 -- ($#1+(210:0.6)$);
	\node[gray] at ($#1+(-0.5,0.1)$) {$\vdots$};
}
\newcommand{\upstar}[2]
{
	\draw[gray] ($#1+(120:0.6)$) -- #1 -- ($#1+(60:0.6)$);
	\node[gray] at ($#1+(0,0.5)$) {$\hdots$};
}
\newcommand{\downstar}[2]
{
	\draw[gray] ($#1+(-120:0.6)$) -- #1 -- ($#1+(-60:0.6)$);
	\node[gray] at ($#1+(0,-0.5)$) {$\hdots$};
}
\newcommand\footnoteref[1]{\protected@xdef\@thefnmark{\ref{#1}}\@footnotemark}
\begin{document}
\title{Coloring squares of planar graphs with small maximum degree%
\footnote{
MK and ST were supported by the ``Szkoła Orłów'' (``School of Eagles'') project, co-financed by the European Social Fund under the Knowledge-Education-Development Operational Programme, Axis III, Higher Education For The Economy And Development, measure 3.1, Competences In Higher Education.
PRz was supported by the project that has received funding from the European Research Council (ERC) under the European Union's Horizon 2020 research and innovation programme Grant Agreement 714704.
}
}

\author{Mateusz Krzyzi\'{n}ski\footnote{Faculty of Mathematics and Information Science, Warsaw University of Technology, Warsaw, Poland, \texttt{m.krzyzinski@student.mini.pw.edu.pl}}     
    \and Pawe\l{} Rz\k{a}\.{z}ewski\footnote{Faculty of Mathematics and Information Science, Warsaw University of Technology, Warsaw, Poland\newline and Faculty of Mathematics, Informatics and Mechanics, University of Warsaw, Poland, \texttt{p.rzazewski@mini.pw.edu.pl}}
    \and Szymon Tur\footnote{Faculty of Mathematics and Information Science, Warsaw University of Technology, Warsaw, Poland, \texttt{s.tur@student.mini.pw.edu.pl}}     
}
\date{}

\maketitle

\begin{abstract}
For a graph $G$, by $\chi_2(G)$ we denote the minimum integer $k$, such that there is a $k$-coloring of the vertices of $G$ in which vertices at distance at most 2 receive distinct colors. Equivalently, $\chi_2(G)$ is the chromatic number of the square of $G$.
In 1977 Wegner conjectured that if $G$ is planar and has maximum degree $\Delta$,
then $\chi_2(G) \leq 7$ if $\Delta \leq 3$, 
$\chi_2(G) \leq \Delta+5$ if $4 \leq \Delta \leq 7$,
and $\lfloor 3\Delta/2 \rfloor +1$ if $\Delta \geq 8$.
Despite extensive work, the known upper bounds are quite far from the conjectured ones, especially for small values of $\Delta$.
In this work we show that for every planar graph $G$ with maximum degree $\Delta$ it holds that $\chi_2(G) \leq 3\Delta+4$.
This result provides the best known upper bound for $6 \leq \Delta \leq 14$.

\end{abstract}

\begin{textblock}{20}(0, 11.5)
\includegraphics[width=40px]{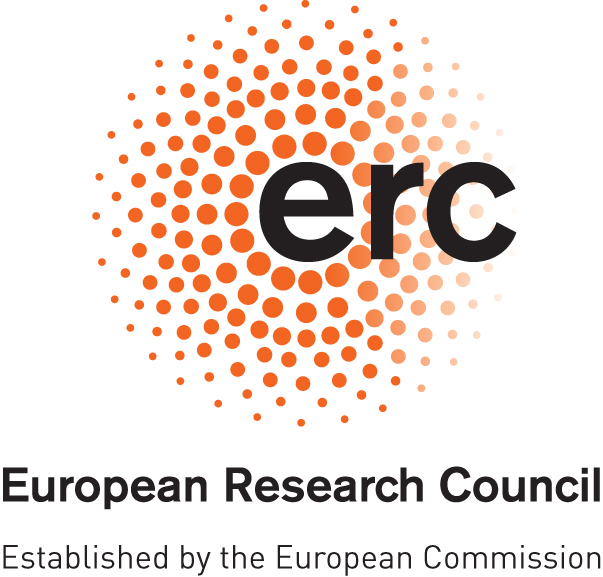}%
\end{textblock}
\begin{textblock}{20}(-0.25, 11.9)
\includegraphics[width=60px]{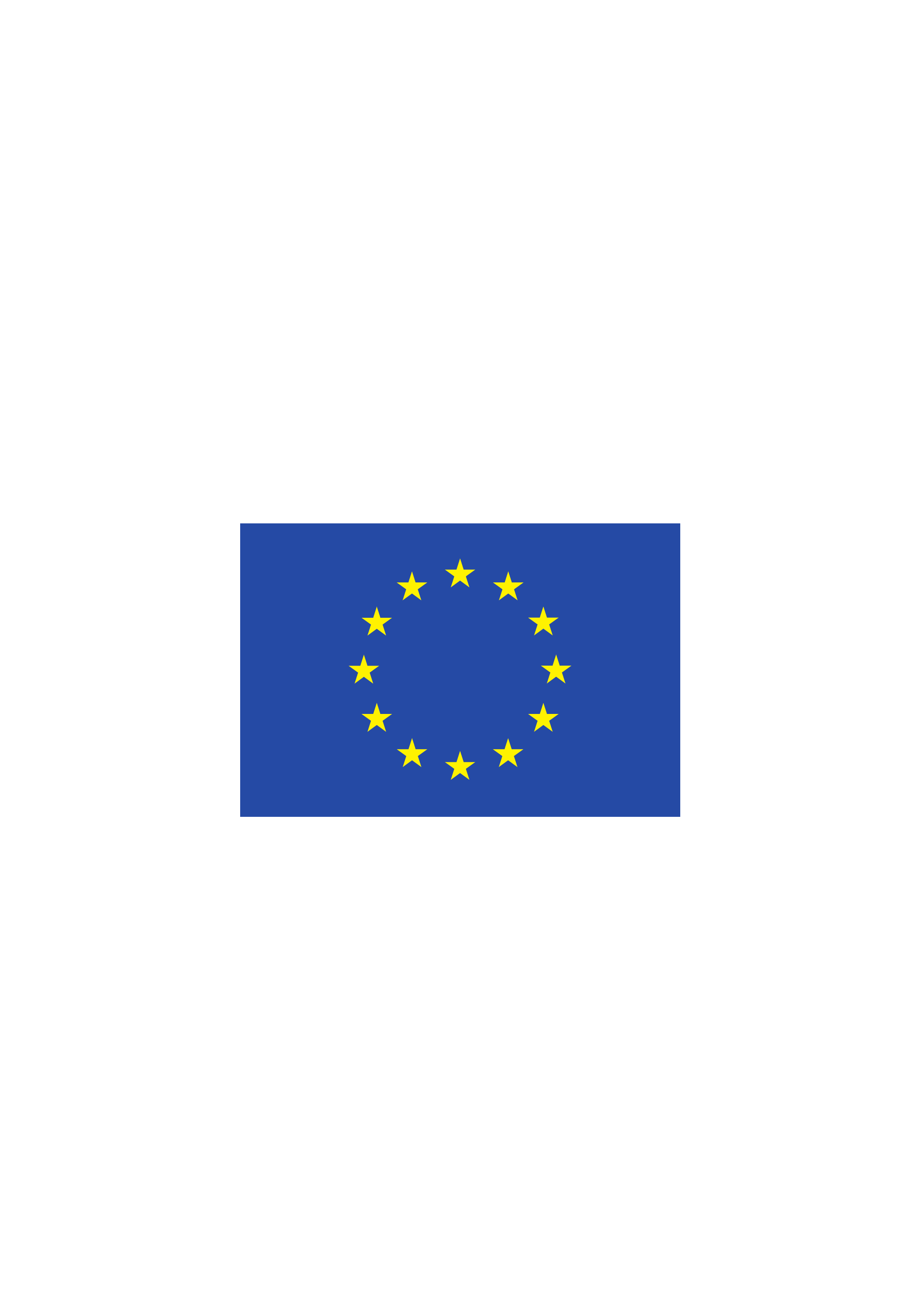}%
\end{textblock}

\section{Introduction}
Graph coloring is undoubtedly among the best studied problems in graph theory.
The origins of the research on graph colorings date back to 19th century and are related to the question whether all planar graphs can be properly colored with four color. The affirmative answer to this question, i.e., the celebrated four color theorem, remains one of the most famous results in graph theory~\cite{zbMATH03601602,zbMATH03601603,DBLP:journals/jct/RobertsonSST97}.
The study of the restrictions and generalizations of the problem~\cite{zbMATH03183649,DBLP:journals/jct/Thomassen94a,Alon1992,DBLP:journals/jctb/GrytczukZ20,DVORAK2020,DBLP:journals/jct/DvorakP18} led to many exciting results and much better understanding of the structure of planar graphs.

Other variants or coloring planar graphs were also considered. Already in 1977, Wegner~\cite{Wegner_conjecture} studied the problem of coloring graphs in such a way that the vertices with the same color must be at distance more than $d$, where $d$ is a fixed integer.
Such a coloring is called \emph{distance-$d$ coloring} and the minimum number of colors in a distance-$d$ coloring of a graph $G$ is denoted by $\chi_d(G)$. Note that for $d=1$ we obtain the classic graph coloring problem.
The next case that has received the most attention is $d=2$. The problem of distance-2 coloring is also known as \emph{$L(1,1)$-labeling}~\cite{DBLP:journals/cj/Calamoneri11}. Let us give a brief overview of the known results on distance-2 coloring of planar graphs.
In what follows $G$ is a planar graph with maximum degree $\Delta$.

First, observe that, in contrast to the classic coloring, there is no universal constant $c$ such that $\chi_2(G) \leq c$ for all planar graphs $G$.
Indeed, for the $n$-vertex star $K_{1,n-1}$ it holds that $\chi_2(K_{1,n-1}) = n$.
This implies that every graph $G$ satisfies $\chi_2(G) \geq \Delta+1$.
On the other hand, as every planar graph has a vertex of degree at most 5, a simple greedy algorithm yields the bound $\chi_2(G) \leq 5\Delta +1$. Thus $\chi_2(G)$ is bounded by a linear function of $\Delta$.

Wegner~\cite{Wegner_conjecture} was probably the first who studied this dependence.
Among other results, he showed that if $G$ is planar and has maximum degree at most 3, then $\chi_2(G) \leq 8$.
He also presented some families of planar graphs which require a large number of colors in any distance-2 coloring and conjectured that the lower bounds given by these families are actually tight.
This problem is known as Wegner's conjecture.

\begin{conjecture*}
Every planar graph $G$ with maximum degree $\Delta$ satisfies
\[
\chi_2(G) \leq
\begin{cases}
7 & \text{ if } \Delta \leq 3,\\
\Delta+5 & \text{ if } 4 \leq \Delta \leq 7,\\
\left\lfloor \frac{3\Delta}{2} \right\rfloor+1 & \text{ if } \Delta \geq 8.
\end{cases}
\]
\end{conjecture*}

The problem of bounding $\chi_2(G)$ received a considerable attention. However, despite many partial results, the only case for which we know tight bound is $\Delta=3$: Thomassen~\cite{Thomassen} confirmed the conjecture by showing that seven colors always suffice.
For $\Delta \geq 4$ the conjecture is wide open; we summarize the known bounds in \cref{tab:other_results}.

\begin{table}[h!]
\centering
\renewcommand{\arraystretch}{1.5}
\begin{tabular}{lll}
\toprule
\textbf{Authors}&     \textbf{Restriction}     &      \textbf{Result}  \\
\midrule \midrule
Thomassen~\cite{Thomassen} & $\Delta \leq 3$ & $\chi_2(G) \leq 7$ \\ \hline
Jonas \cite{Jonas}		&    $\Delta \geq 7$ 		    &     $\chi_2(G) \leq 8\Delta - 22$  \\\hline
Wong \cite{Wong} &    $\Delta \geq 7$ 		    &     $\chi_2(G) \leq 3\Delta + 5$ \\\hline
Madaras and Marcinova \cite{Madaras} &  $\Delta \geq 12$ &  $\chi_2(G) \leq 2\Delta + 18$ \\\hline
\multirow{3}{*}{Borodin et al.  \cite{Borodin}} &   $\Delta \leq 20$ &  $\chi_2(G) \leq 59$ \\
              			&  $21 \leq \Delta\leq 46$	&    $\chi_2(G) \leq \Delta + 39$ \\
          				&  $\Delta \geq 47$ 		   	&    $\chi_2(G) \leq \left\lceil \frac{9\Delta}{5} \right\rceil + 1$ \\\hline
\multirow{2}{*}{van den Heuvel and McGuinness  \cite{Heuvel} }&  $\Delta \geq 5$ &  $\chi_2(G) \leq 9\Delta - 19$ \\
						&         								&     $\chi_2(G) \leq 2\Delta + 25$ \\\hline
Agnarsson and Halldorsson \cite{Agnarsson} &  $\Delta \geq 749$ &     $\chi_2(G) \leq \left\lfloor \frac{9\Delta}{5} \right\rfloor + 2$ \\\hline
\multirow{2}{*}{Molloy and Salavatipour \cite{Molloy} } & $\Delta \geq 241$ &  $\chi_2(G) \leq \left\lceil \frac{5\Delta}{3} \right\rceil + 25$ \\
						&                           				&    $\chi_2(G) \leq \left\lceil \frac{5\Delta}{3} \right\rceil + 78$ \\\hline
\multirow{2}{*}{Zhu and Bu  \cite{J_Zhu} }& $\Delta \leq 5$ &    $\chi_2(G) \leq 20$ \\
  						&        $\Delta \geq 6$ 		&    $\chi_2(G) \leq 5\Delta - 7$ \\
\bottomrule
\end{tabular}
\caption{The progress on Wegner's conjecture.}\label{tab:other_results}
\end{table}

Let us highlight that the currently best known bound is $\left \lceil \frac{5\Delta}{3}   \right \rceil  + \mathcal{O}(1)$ by Molloy and Salavatipour~\cite{Molloy}. However, since the additive constant is large, this bound is very far from the conjectured one for small values of $\Delta$. Thus some attention has been put on refining the bounds for graphs with small maximum degree, see~\cref{fig:results_plot}.
We continue this line of research and show the following result.

\begin{theorem}
\label{main_theorem}
Every planar graph $G$ with maximum degree $\Delta$ satisfies $\chi_2(G) \leq 3\Delta+4$.
\end{theorem}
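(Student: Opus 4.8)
The plan is to argue by contradiction. Let $G$ be a counterexample with the fewest vertices: $G$ is planar, $\Delta(G) \le \Delta$, and $\chi_2(G) > 3\Delta + 4$, while every planar graph with fewer vertices and maximum degree at most $\Delta$ has a distance-$2$ coloring with $3\Delta + 4$ colors. A short gluing argument along cut vertices — color the two sides separately, then apply a color permutation to one side so that the two colorings agree on the closed neighborhood of the cut vertex, which is a clique in the square — shows that $G$ may be assumed $2$-connected; fix a plane embedding, so that every face is bounded by a cycle.

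The heart of the argument is a collection of \emph{reducible configurations}. The basic extension tool is the following: if $H$ is obtained from $G$ by deleting a small vertex set $S$ and, when necessary, adding a few edges between, or identifying, vertices of $N(S)$ that lie on a common face, in such a way that $H$ is planar, $\Delta(H) \le \Delta$, and (after performing the same identifications) every edge of $G^2 - S$ is present in $H^2$, then a $(3\Delta+4)$-coloring of $H^2$ — which exists by minimality — extends to $S$ as long as the vertices of $S$ can be ordered so that each of them sees fewer than $3\Delta+4$ already-colored vertices of $G^2$. Since $|N_{G^2}(v)| \le \sum_{u\in N_G(v)} \deg_G(u)$, with a saving of $2$ for each triangle through $v$ and of $1$ for each pair of vertices in $N_{G^2}(v)$ whose colors the construction of $H$ forces to coincide, one extracts the usual list of forbidden structures: first $\delta(G) \ge 4$, then bounds on the degrees of the neighbors of degree-$4$ and degree-$5$ vertices, and then bounds on how many degree-$4$ and degree-$5$ vertices can cluster around a single face or around a high-degree vertex. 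The delicate point, and the reason the configurations are not simply ``a low-degree vertex together with a clique on its neighborhood'', is that the extra edges needed to preserve the square-graph adjacencies must not push any degree above $\Delta$; avoiding this forces one to identify vertices lying on a common face (or to color two vertices of $S$ simultaneously) rather than to add a clique, and since the crude count $\sum_{u\sim v}\deg_G(u)$ already falls only an additive constant short of $3\Delta+3$, there is almost no slack and the case analysis is tight.

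Finally, a discharging argument converts the local restrictions into a global contradiction. Assign to each vertex $v$ the charge $\deg_G(v)-4$ and to each face $f$ the charge $\ell(f)-4$, where $\ell(f)$ denotes the length of $f$; by Euler's formula the total charge is $-8 < 0$. Since $\delta(G)\ge 4$, every vertex starts with nonnegative charge, and every face of length at least $4$ starts nonnegative as well, so the only deficient objects are triangular faces. One then designs rules sending charge into each triangular face from its incident vertices — vertices of degree at least $5$ carry a surplus, and the reducible configurations control how triangular faces, degree-$4$ vertices, and degree-$5$ vertices can be arranged next to one another — so that after discharging every vertex and every face is nonnegative, contradicting the total $-8$. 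I expect the main obstacle to be making the reducibility around degree-$4$ and degree-$5$ vertices strong enough: such vertices are forced to exist in any plane graph, the naive count is off only by a small additive constant, and recovering that constant requires carefully combining the triangle savings, shared second neighbors, and face-preserving identifications; designing the discharging rules so that they then balance exactly is the second, and intertwined, difficulty.
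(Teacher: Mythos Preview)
Your outline follows the paper's strategy (minimal counterexample, reducible configurations, discharging with charge $\deg(x)-4$), but there is a concrete gap: the claim that $\delta(G)\ge 4$ is reducible is unjustified and, with the tools you describe, does not go through. Take a $3$-vertex $v$ whose three neighbours $v_1,v_2,v_3$ all have degree $\Delta$. Deleting $v$ and adding edges to keep $v_1,v_2,v_3$ pairwise at distance $\le 2$ raises at least one of their degrees above $\Delta$ (each $v_i$ loses one edge but must gain two to sit in a triangle, or one $v_i$ must gain two to serve as a common neighbour); identifying two of them produces a vertex of degree close to $2\Delta$; and simply colouring $G-v$ and then recolouring a conflicting $v_i$ is hopeless since $v_i$ may have $\Theta(\Delta^2)$ second neighbours. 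None of the mechanisms you list (``adding a few edges between, or identifying, vertices of $N(S)$ that lie on a common face'') rescues this case.

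The paper does \emph{not} obtain $\delta(G)\ge 4$; it only gets $\delta(G)\ge 3$ and then proves three further reducibility claims constraining $3$-vertices: no two adjacent $3$-vertices, no $3$-vertex on a $3$-face, and no $3$-vertex on two $4$-faces. Together these force every $3$-vertex to lie on at least two $5^+$-faces, and an extra discharging rule ($5^+$-faces send $\tfrac12$ to each incident $3$-vertex) covers their deficit of $-1$. Your discharging sketch, which asserts that ``the only deficient objects are triangular faces'', breaks at exactly this point: $3$-vertices are deficient too, and you need both the structural claims and the face-to-vertex rule to handle them. (A second, smaller issue: the paper uses $|V|+|E|$ rather than $|V|$ as the minimality measure, because some of its reductions delete an edge without deleting a vertex; you may run into this as well when you flesh out the $5$-vertex configurations.)
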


We point out that \cref{main_theorem} provides the best known upper bound for the cases $6 \leq \Delta \leq 14$.

\begin{figure}[h!]
\begin{center}
\resizebox{\textwidth}{!}{\input{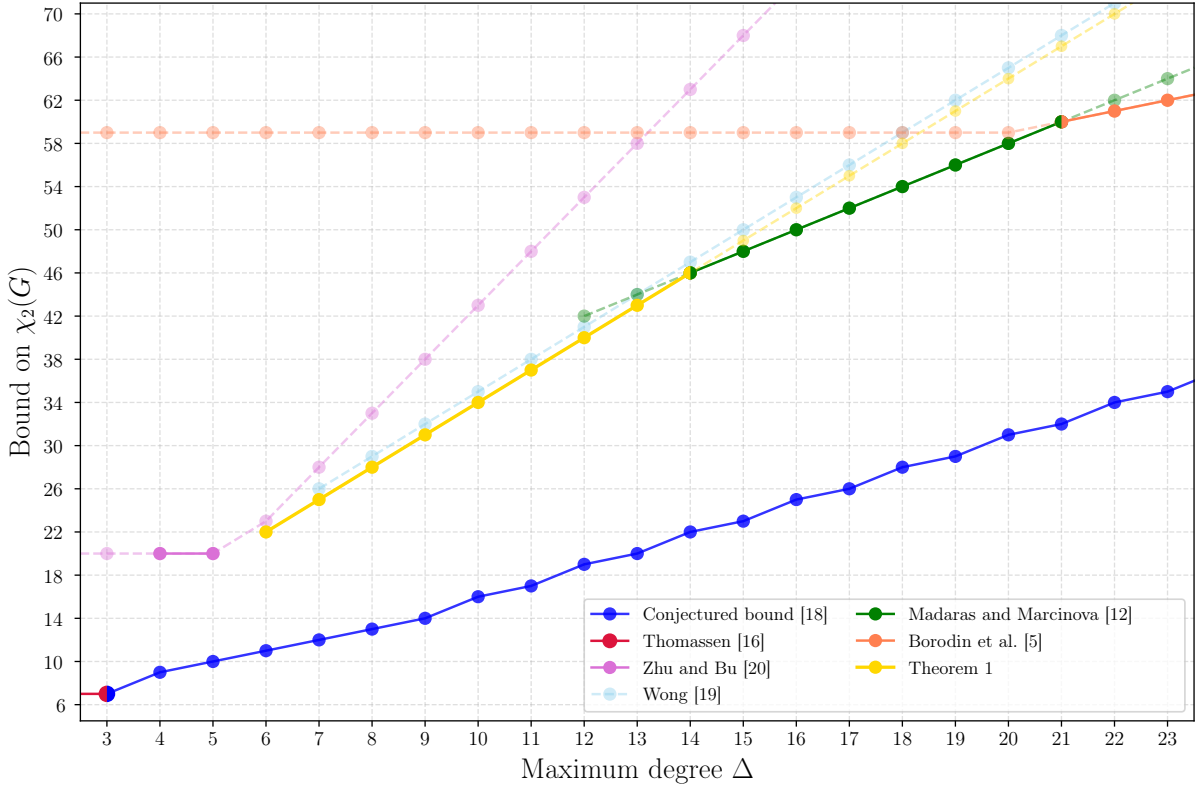}}
\end{center}
\caption{Dependence of $\chi_2(G)$ on $\Delta$ for $3 \leq \Delta \leq 23$.}
\label{fig:results_plot}
\end{figure}

The proof of \cref{main_theorem} uses the discharging method. We consider a minimal counterexample $G$ and we fix its plane embedding.
In \cref{sec:configurations} we show that $G$ cannot contains certain subgraphs, as this would contradict the minimality of $G$.
Then, in \cref{sec:discharging}, we distribute some integer values, called \emph{charges}, to the vertices and faces of $G$ in a way that the total charge is negative.
Next, we apply six \emph{discharging rules} to transfer charges between the vertices and faces of $G$. 
Eventually, we analyze the final charges and find out that every vertex and every face has nonnegative charge.
As in the discharging phase no charge is created nor lost, this is a contradiction.
Thus the counterexample to \cref{main_theorem} cannot exist.

\section{Preliminaries}
All graphs considered in the paper are simple and finite.
For a graph $G$, by $V(G)$ and $E(G)$ we denote, respectively, the vertex set and the edge set of $G$.
Furthermore, if $G$ is planar and given along with a fixed plane embedding, then $F(G)$ denotes the set of faces of $G$.

For two vertices $v$ and $u$, by $\mathrm{dist}_G(v, u)$ we denote the \emph{distance} between these vertices, i.e., the  number of edges on a shortest $u$-$v$ path in $G$. 
For a vertex $v$, by $N_G(v)$ we denote its \emph{neighborhood}, i.e., the set of all vertices adjacent to $v$, and by $\deg_G(v)$ we denote the \emph{degree} of a vertex $v$, i.e., $\left|N_G(v)\right|$. The maximum and the minimum degree of $G$ are denoted by, respectively, $\Delta(G)$ and $\delta(G)$.

For a vertex $v \in V(G)$, by $G-v$ we denote the graph obtained from $G$ by removing $v$ with all incident edges.
For $u,v \in V(G)$, by $G + uv$ we denote the graph with vertex set $V(G)$ and edge set $E(G) \cup \{uv\}$. Note that if $uv \in E(G)$, then $G = G + uv$.
In other words, we never create multiple edges.

Each face $f$ is bounded by a closed walk called a \emph{boundary}. We write $f = [v_1, v_2, \ldots, v_k]$ to denote the cyclic ordering of vertices along the the boundary of $f$. Note that if $G$ is 2-connected, then the boundary of each face is a cycle; we will always work in this setting.

We say that a vertex $v$ and a face $f$ are \emph{incident} if $v$ lies on the boundary of $f$.
By $\deg_G(f)$ we denote the \emph{degree} of a face $f$, i.e., the number of vertices incident to $f$.

If the graph $G$ is clear from the context, we drop the subscript in the notation above. 

For an integer $d$, a vertex $v$ is said to be a $d$-vertex (respectively, a $d^+$-vertex, a $d^-$-vertex) if its degree is exactly $d$ (respectively, at least $d$, at most $d$).
Similarly, a face $f$ is said to be a $d$-face (respectively,  a $d^+$-face, a $d^-$-face)  if its degree is exactly $d$ (respectively, at least $d$, at most $d$).

Often we will consider a situation where some subset of vertices of a graph $G$ is colored.
For an uncolored vertex $v$, we say that a color is \emph{blocked} if it appears on a vertex within distance at most 2 from $v$.
A color that is not blocked is \emph{free}.

\section{Main proof}

For contradiction suppose that \cref{main_theorem} does not hold and let $G$ be a minimum counterexample.
Thus, for any planar graph $G'$, if $|V(G')| + |E(G')| < |V(G)| + |E(G)|$ and $\Delta(G') \leq \Delta$, then $\chi_2(G') \leq 3\Delta +4$.

Observe that by the already mentioned result by Zhu and Bu~\cite{J_Zhu} we can safely assume that $\Delta \geq 6$. Furthermore, we can assume that $G$ is connected, as the coloring of $G$ can be obtained by coloring each connected component independently and each of them is smaller than $G$.

Fix some plane embedding of $G$. Whenever we refer to faces of $G$, we mean the faces of this fixed plane embedding.

\subsection{Forbidden configurations}\label{sec:configurations}
In this section we present a series of technical claims in which we analyze the structure of the graph $G$.

\begin{claim}\label{clm:2connected}
$G$ is 2-connected.
\end{claim}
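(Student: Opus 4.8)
The plan is to argue by contradiction: suppose $G$ has a cut vertex $v$. Fix one connected component $C$ of $G-v$ and set $G_1 = G[\{v\}\cup C]$ and $G_2 = G[V(G)\setminus C]$. Since $v$ is a cut vertex, every edge of $G$ lies in exactly one of $G_1,G_2$ and $V(G_1)\cap V(G_2)=\{v\}$; as $V(G_i)\subsetneq V(G)$ and $E(G_i)\subseteq E(G)$, we get $|V(G_i)|+|E(G_i)| < |V(G)|+|E(G)|$ for $i\in\{1,2\}$. Both $G_i$ are planar and satisfy $\Delta(G_i)\le\Delta$, so by minimality of $G$ each admits a distance-$2$ coloring $c_i$ using the colors $\{1,\dots,3\Delta+4\}$. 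I would then glue $c_1$ and $c_2$ into such a coloring of $G$, obtaining the desired contradiction.

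The key observation is that the two pieces interact only mildly. If $x\in V(G_1)\setminus\{v\}$ and $y\in V(G_2)\setminus\{v\}$, every $x$--$y$ path in $G$ passes through $v$, so $\mathrm{dist}_G(x,y)\ge 2$, with equality exactly when $x\in N_{G_1}(v)$ and $y\in N_{G_2}(v)$. Moreover, for $x,y\in V(G_i)$ we have $\mathrm{dist}_{G_i}(x,y)=\mathrm{dist}_G(x,y)$ whenever the latter is at most $2$, since a shortest path cannot leave $V(G_i)$ and return without repeating $v$. Hence the only thing preventing us from using $c_1$ on $V(G_1)$ and $c_2$ on $V(G_2)$ is that (i) $v$ must get the same color on both sides, and (ii) the colors of $N_{G_2}(v)$ must avoid those of $\{v\}\cup N_{G_1}(v)$.

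To fix this I would replace $c_2$ by $\pi\circ c_2$ for a suitable permutation $\pi$ of $\{1,\dots,3\Delta+4\}$; permuting colors preserves distance-$2$-properness, so $\pi\circ c_2$ is still a valid coloring of $G_2$. The vertices $\{v\}\cup N_{G_2}(v)$ are pairwise within distance $2$ in $G_2$, so $c_2$ is injective on this set of size $1+\deg_{G_2}(v)$. Define $\pi(c_2(v))=c_1(v)$ and map the remaining $\deg_{G_2}(v)$ colors injectively into $\{1,\dots,3\Delta+4\}\setminus\bigl(\{c_1(v)\}\cup c_1(N_{G_1}(v))\bigr)$; this is possible because
\[
3\Delta+4-\bigl(1+\deg_{G_1}(v)\bigr)\ \ge\ 3\Delta+3-\deg_G(v)\ \ge\ 2\Delta+3\ \ge\ \deg_{G_2}(v),
\]
using $\deg_G(v)=\deg_{G_1}(v)+\deg_{G_2}(v)\le\Delta$. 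Extend $\pi$ to a bijection arbitrarily and set $c=c_1$ on $V(G_1)$, $c=\pi\circ c_2$ on $V(G_2)$; this is well defined since both assign $c_1(v)$ to $v$.

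Finally I would verify that $c$ is a distance-$2$ coloring of $G$: for a pair at distance at most $2$, if both vertices lie in $V(G_1)$ we use the distance remark and properness of $c_1$; if both lie in $V(G_2)$, properness of $\pi\circ c_2$; and the only remaining case is $x\in N_{G_1}(v)$, $y\in N_{G_2}(v)$, where $c(x)\in c_1(N_{G_1}(v))$ while $c(y)=\pi(c_2(y))\notin c_1(N_{G_1}(v))$ by construction. This gives $\chi_2(G)\le 3\Delta+4$, contradicting the choice of $G$. I do not expect a genuine obstacle here; the only points requiring care are the distance remark (which is what makes the gluing legitimate) and the elementary count above guaranteeing enough spare colors to re-route the colors on $N_{G_2}(v)$.
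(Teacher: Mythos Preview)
Your proof is correct and follows exactly the same approach as the paper: split $G$ at the cut vertex $v$ into $G_1$ and $G_2$, color each by minimality, and permute the colors on one side so that $v$ receives the same color in both and the two neighborhoods of $v$ use disjoint color sets. You simply spell out in full the distance observation and the counting argument that the paper leaves implicit.
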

\begin{proof}
Assume that $G$ has a cutvertex $v$ and let $C$ be the vertex set of one connected component of $G - v$.
Define $G'_1 = G[C \cup \{v\}]$ and $G'_2 := G - C$. By the minimality of $G$, for each $i \in \{1,2\}$ there is a distance-2 $(3\Delta + 4)$-coloring $\varphi_i$ of $G'_i$.
We can permute the colors in $\varphi_2$ so that (i)~$\varphi_1(v) = \varphi_2(v)$ and (ii) $\varphi_1(N_{G_1}(v)) \cap \varphi_2(N_{G_2}(v)) = \emptyset$. The union of these colorings is a distance-2  $(3\Delta + 4)$-coloring of $G$, a contradiction.
\end{proof}

So by \cref{clm:2connected} from now on we can assume that the boundary of each face is a simple cycle with at least three edges.

The proofs of the next few claims follow the same outline.
First, assume that $G$ contains some configuration that we want to exclude.
We modify $G$ by removing a single vertex $v$ and possibly adding some new edges,
in order to obtain a graph $G'$ with the following properties:
\begin{enumerate}[label=(\roman*)]
\item $|V(G')| + |E(G')| \leq |V(G)| + |E(G)|$,
\item $\Delta(G') \leq \Delta$,
\item $G'$ is planar (and its plane embedding can be easily obtained from the plane embedding of $G$),
\item all pairs of vertices in $V(G) \setminus \{v\}$ that are at distance at most 2 in $G$ are at distance at most 2 in $G'$.
\end{enumerate}
By properties (i), (ii), and (iii) and the minimality of $G$ we observe that $G'$ has a distance-2 $(3\Delta + 4)$-coloring $\varphi$.
By property (iv), we can safely color all vertices of $V(G) \setminus \{v\}$ according to $v$. To obtain a distance-2 $(3\Delta + 4)$-coloring for $G$ we only need to find a color for $v$. We do this by ensuring that the number of colors that are blocked for $v$ is strictly less than $3\Delta+4$. Thus there is a free color for $v$, as $G$ admits a distance-2 $(3\Delta + 4)$-coloring, a contradiction.

For brevity, in the proofs we only say how to define $G'$ and compute the number of colors that are blocked for $v$.
In particular, we will not explicitly check properties (i)--(iv), as verifying them is straightforward.

\begin{claim}\label{delta3}
$\delta(G) \geq 3$.
\end{claim}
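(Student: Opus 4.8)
The plan is to follow verbatim the template spelled out just before the statement: assume $G$ has a vertex $v$ of degree at most $2$, delete it (adding one edge), obtain $G'$ satisfying (i)--(iv), and then bound the number of colors blocked for $v$. Since \cref{clm:2connected} already forces $\delta(G) \geq 2$ (a $2$-connected graph on at least three vertices has minimum degree at least $2$, and our minimal counterexample is large because $\Delta \geq 6$), it suffices to handle the case where $v$ is a $2$-vertex. Let $u_1, u_2$ be the two neighbors of $v$ and set $G' := (G - v) + u_1 u_2$.

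Next I would check properties (i)--(iv), all of which are immediate. For (i): we remove one vertex and two edges while adding at most one edge, so $|V(G')| + |E(G')| \leq |V(G)| + |E(G)| - 1$. For (ii): each of $u_1, u_2$ loses the edge to $v$ and gains at most the edge $u_1u_2$, so no degree increases, and all other degrees are unchanged; hence $\Delta(G') \leq \Delta$. For (iii): as $v$ has degree $2$, the edge $u_1u_2$ can be drawn alongside the path $u_1 v u_2$ inside one of the two faces incident to $v$, so $G'$ stays planar with an obvious embedding. For (iv): no edge of $G - v$ is deleted, so the only pairs in $V(G) \setminus \{v\}$ whose $G$-distance could drop because of removing $v$ are pairs joined by a length-$2$ path through $v$; since $N_G(v) = \{u_1, u_2\}$, the only such pair is $(u_1, u_2)$, and in $G'$ these vertices are adjacent.

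Then, invoking the minimality of $G$, I take a distance-$2$ $(3\Delta+4)$-coloring $\varphi$ of $G'$, keep it on $V(G) \setminus \{v\}$, and extend it to $v$. The colors blocked for $v$ are those on vertices within distance $2$ of $v$ in $G$, and there are at most $\deg_G(v) + \sum_{u \in N_G(v)} (\deg_G(u) - 1) = 2 + (\deg_G(u_1) - 1) + (\deg_G(u_2) - 1) \leq 2\Delta$ such vertices. Since $2\Delta < 3\Delta + 4$, at least one color is free for $v$, giving a distance-$2$ $(3\Delta+4)$-coloring of $G$ and contradicting the choice of $G$. There is no genuinely hard step here; the only things worth a sentence are verifying (iii) and (iv), both of which are trivial precisely because $v$ has only two neighbors, and the slack $2\Delta < 3\Delta + 4$ reflects that small-degree vertices are the easy configurations — the real work of the paper lies in the later claims and the discharging argument.
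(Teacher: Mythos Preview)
Your proof is correct and follows essentially the same approach as the paper. The only cosmetic difference is that the paper disposes of the degree-$1$ case directly (set $G' := G - v$, at most $\Delta$ colors blocked) rather than invoking \cref{clm:2connected}, but your degree-$2$ argument---construct $G' = (G-v) + u_1u_2$ and count at most $2\Delta$ blocked colors---matches the paper's exactly.
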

\begin{proof}
First suppose that $\deg(v)=1$.
We set $G' := G - v$ and observe that a distance-2 $(3\Delta + 4)$-coloring of $G'$ blocks at most $\Delta < 3\Delta+4$ colors for $v$.

Now, assume that $\deg(v)=2$, let $N(v) = \{v_1, v_2\}$, see \cref{fig:delta3}.
Let $G' := G - v + v_1v_2$.
We observe that a distance-2 $(3\Delta + 4)$-coloring of $G'$ blocks at most $2\Delta < 3\Delta+4$ colors for $v$.
\end{proof}
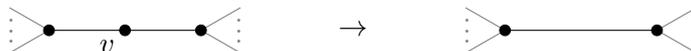
\begin{figure}[h!]
	\begin{center}	
		\begin{tikzpicture}
			\coordinate (a) at (0,0);
			\coordinate (b) at (1,0);
			\coordinate (c) at (-1,0);
			\draw (c) -- (a) -- (b);
			\rightstar{(b)}{\Delta-1}
			\leftstar{(c)}{\Delta-1}
			\filldraw[black] (a) circle (2pt) node[anchor=north east] {$v$};
			\filldraw[black] (b) circle (2pt);
			\filldraw[black] (c) circle (2pt);
	
			\node at (3,0) {$\to$};
			\coordinate (d) at (7,0);
			\coordinate (e) at (5,0);
			\rightstar{(d)}{\Delta-1}
			\leftstar{(e)}{\Delta-1}
			
			\draw (d) -- (e);
	
			\filldraw[black] (d) circle (2pt);
			\filldraw[black] (e) circle (2pt);
		\end{tikzpicture}
	\end{center}
\caption{Forbidden configuration in \cref{delta3}.}\label{fig:delta3}
\end{figure}

\begin{claim}\label{adj_3v}
$G$ has no two adjacent 3-vertices.
\end{claim}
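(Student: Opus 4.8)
The plan is to follow the same template set up before the statement of \cref{delta3}: assume $G$ contains two adjacent $3$-vertices, remove one of them, add a few edges to preserve distances, appeal to minimality to get a coloring of the smaller graph, and then show that fewer than $3\Delta+4$ colors are blocked for the removed vertex.

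Concretely, suppose $u$ and $v$ are adjacent $3$-vertices. Write $N(v) = \{u, v_1, v_2\}$ and $N(u) = \{v, u_1, u_2\}$, where possibly some of $u_1, u_2$ coincide with $v_1$ or $v_2$ (the bound we obtain will only get better in that case, so we may assume these four vertices are distinct). I would set $G' := G - v + v_1 v_2 + u v_1 + u v_2$, i.e., delete $v$, and reconnect its two ``outer'' neighbors to each other and to $u$. One should check — but this is the routine verification (i)--(iv) the authors explicitly skip — that $|V(G')| + |E(G')| \le |V(G)| + |E(G)|$ (we delete one vertex and three edges incident to $v$, and add at most three edges), that $\Delta(G') \le \Delta$ (the only vertices whose degree changes are $u, v_1, v_2$; each was a neighbor of $v$, and $u$ had degree $3$ so it ends up with degree at most $4 \le 6 \le \Delta$, while $v_1, v_2$ gain at most two edges each, and since $\Delta \ge 6$ we need them to have had degree at most $\Delta - 2$; if not, one restricts the set of added edges — see the obstacle paragraph below), that $G'$ is planar with an embedding read off from $G$'s (the new edges can be drawn inside the faces incident to $v$), and that distances at most $2$ among vertices of $V(G)\setminus\{v\}$ are preserved, which is exactly why we add $v_1v_2$, $uv_1$, $uv_2$: any path through $v$ of length $2$ becomes a path of length $1$ or $2$ through these new edges.

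By minimality of $G$, the graph $G'$ has a distance-$2$ $(3\Delta+4)$-coloring $\varphi$; by property (iv) we color $V(G)\setminus\{v\}$ accordingly and only need a free color for $v$. The colors blocked for $v$ are those appearing on vertices within distance $2$ of $v$ in $G$. The vertices within distance $1$ are $u, v_1, v_2$ — three of them. The vertices at distance exactly $2$ are: the neighbors of $u$ other than $v$, namely $u_1, u_2$ (only two, since $u$ has degree $3$); and the neighbors of $v_1$ and $v_2$ other than $v$, at most $\Delta - 1$ each. So the number of blocked colors is at most $3 + 2 + 2(\Delta - 1) = 2\Delta + 3$, which is strictly less than $3\Delta + 4$ whenever $\Delta \ge 1$, hence in particular for $\Delta \ge 6$. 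Thus a free color for $v$ exists, contradicting that $G$ is a counterexample.

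The main obstacle is the same one lurking in every claim of this type: making sure adding edges to $G$ does not push $\Delta(G')$ above $\Delta$, and that the new edges can be realized planarly. For the degree issue, the standard fix is to only add an edge $uv_i$ or $v_1v_2$ when both endpoints have room; if $v_1$ (say) already has degree $\Delta$, one simply omits the edges incident to $v_1$ — and then one must re-examine property (iv), noting that a length-$2$ path through $v$ using $v_1$ need not be ``shortcut,'' but since such a path has the form $x v_1 v \, v_2$ or $x v_1 v u$ and we still have the other added edges, or else one argues $v_1$ having degree $\Delta$ means $v$ has a heavily-loaded neighbor so a different local modification applies. For planarity, the three faces incident to $v$ (there are exactly three, as $v$ is a $3$-vertex in a $2$-connected plane graph by \cref{clm:2connected}) supply the routing: $v_1 v_2$ through one of them, and $uv_1$, $uv_2$ through the two faces containing $u$. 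I expect the clean writeup to either handle these degenerate degree cases with a short case split or, more likely in the style of this paper, to choose the added-edge set minimally and absorb the checks into the blanket statement that properties (i)--(iv) are ``straightforward'' to verify.
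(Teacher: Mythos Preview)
Your approach follows the same template as the paper, and your blocked-color count of $2\Delta+3$ is exactly what the paper obtains. The one place your argument diverges is in the choice of $G'$: you set $G' = G - v + v_1v_2 + uv_1 + uv_2$, whereas the paper sets $G' = G - v + uv_1 + uv_2$, omitting the edge $v_1v_2$.

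That extra edge is what generates the entire ``obstacle'' paragraph in your write-up. With $v_1v_2$ included, each of $v_1,v_2$ loses one edge (to $v$) but gains two (to $u$ and to each other), so its degree may rise to $\Delta+1$; you then have to argue about dropping edges and re-checking property~(iv), which you leave unresolved. The paper sidesteps this entirely: with only $uv_1$ and $uv_2$ added, the degree of each $v_i$ is unchanged (it loses the edge to $v$ and gains the edge to $u$), and $u$ goes from degree~$3$ to degree at most~$4\le\Delta$. Property~(iv) still holds because the only length-$2$ path through $v$ not directly replaced by a new edge is $v_1\text{--}v\text{--}v_2$, and in $G'$ the vertices $v_1$ and $v_2$ are both adjacent to $u$, hence at distance~$2$. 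So the edge $v_1v_2$ was never needed, and dropping it removes the degree obstruction outright rather than requiring a case split.
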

\begin{proof}
Suppose $u$ and $v$ are adjacent 3-vertices and let $N(v) = \{u,v_1,v_2\}$, see \cref{fig:adj_3v}.
Let $G' = G - v + uv_1 + uv_2$.  
We observe that a distance-2 $(3\Delta + 4)$-coloring of $G'$ blocks at most $2\Delta+3 < 3\Delta+4$ colors for $v$.
\end{proof}
\begin{figure}[h!]	
	\begin{center}
		\begin{tikzpicture}
		\coordinate (u) at (1,0);
		\coordinate (v) at (0,0);
		\coordinate (v1) at ($(v)+(140:1)$);
		\coordinate (v2) at ($(v)+(220:1)$);
		
		\coordinate (u1) at ($(u)+(40:0.6)$);
		\coordinate (u2) at ($(u)+(-40:0.6)$);
		
			\leftstar{(v1)}{\Delta-1}
			\leftstar{(v2)}{\Delta-1}
		
		\draw (v1) -- (v) -- (v2);
		\draw (u) -- (v);
		
		\draw[gray] (u1) -- (u) -- (u2);

		\filldraw[black] (u) circle (2pt) node[anchor=north east] {$u$};
		\filldraw[black] (v) circle (2pt) node[anchor=north west] {$v$};
		\filldraw[black] (v1) circle (2pt) node[anchor=south west] {$v_1$};
		\filldraw[black] (v2) circle (2pt) node[anchor=north west] {$v_2$};
		
		\node at (3,0) {$\to$};
		
		\coordinate (up) at (7,0);
		\coordinate (vp) at (6,0);
		\coordinate (vp1) at ($(vp)+(140:1)$);
		\coordinate (vp2) at ($(vp)+(220:1)$);
		
		\coordinate (up1) at ($(up)+(40:0.6)$);
		\coordinate (up2) at ($(up)+(-40:0.6)$);
		
			\leftstar{(vp1)}{\Delta-1}
			\leftstar{(vp2)}{\Delta-1}
		
		\draw (vp1) -- (up) -- (vp2);

		\draw[gray] (up1) -- (up) -- (up2);

		\filldraw[black] (up) circle (2pt) node[anchor=north] {$u$};
		
		\filldraw[black] (vp1) circle (2pt) node[anchor=south west] {$v_1$};
		\filldraw[black] (vp2) circle (2pt) node[anchor=north west] {$v_2$};
		\end{tikzpicture}
	\end{center}
	\caption{Forbidden configuration in \cref{adj_3v}.}	\label{fig:adj_3v}
\end{figure}
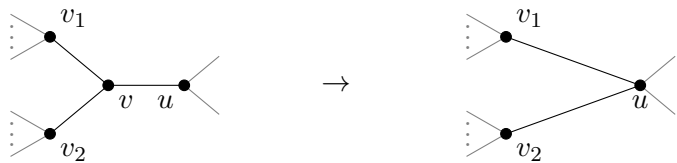

\begin{claim}\label{3v_inc_3f}
$G$ has no 3-vertex incident to a 3-face.
\end{claim}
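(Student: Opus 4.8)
The plan is to follow the removal-and-recolour template set up for the preceding claims. Suppose for contradiction that $G$ has a $3$-vertex $v$ incident to a $3$-face $f$. Since $G$ is $2$-connected (\cref{clm:2connected}), the boundary of $f$ is a triangle, say $f = [v, a, b]$, so in particular $ab \in E(G)$. Let $w$ be the third neighbour of $v$; the vertices $a$, $b$, $w$ are pairwise distinct because $\deg(v) = 3$ and $G$ is simple.

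The key point — and the only place where the $3$-face hypothesis is genuinely used — is that, since $a$ and $b$ are already adjacent, a \emph{single} new edge suffices to preserve all distance-$2$ relations that pass through $v$. Concretely, I would set $G' := G - v + aw$ (and just $G' := G - v$ if $aw$ is already an edge of $G$). Then $|V(G')| + |E(G')|$ drops by at least $3$, so (i) holds with room to spare; planarity (iii) is clear, since deleting the degree-$3$ vertex $v$ merges the three faces incident to $v$ into one face on whose boundary $a$, $b$, and $w$ all lie, so $aw$ can be drawn there; for the degree bound (ii), each of $a$ and $w$ loses its edge to $v$ and gains the edge $aw$, so in either case $\deg_{G'}(a) \le \deg_G(a) \le \Delta$ and $\deg_{G'}(w) \le \deg_G(w) \le \Delta$, while $\deg_{G'}(b) = \deg_G(b) - 1$, whence $\Delta(G') \le \Delta$; and (iv) holds because the only pairs with a shortest path through $v$ lie in $\{a, b, w\}$, and in $G'$ we have $ab \in E(G')$, $aw \in E(G')$, and the path $b$–$a$–$w$ of length $2$, so all three pairs remain within distance $2$.

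With (i)--(iv) in hand, the minimality of $G$ yields a distance-$2$ $(3\Delta+4)$-colouring $\varphi$ of $G'$, and by (iv) we may colour every vertex of $V(G) \setminus \{v\}$ according to $\varphi$. The colours blocked for $v$ are exactly those appearing on vertices within distance $2$ of $v$ in $G$, that is, on $\{a, b, w\}$ together with the neighbours other than $v$ of $a$, $b$, and $w$; this set has size at most $3 + (\deg(a) - 1) + (\deg(b) - 1) + (\deg(w) - 1) = \deg(a) + \deg(b) + \deg(w) \le 3\Delta < 3\Delta + 4$. Hence some colour is free for $v$, and extending $\varphi$ to $v$ gives a distance-$2$ $(3\Delta+4)$-colouring of $G$, a contradiction.

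I do not expect a serious obstacle here: the counting leaves slack of several colours, and the verification of (i)--(iv) is routine (and, following the convention of this section, would be stated only briefly). The one genuinely substantive observation is that the triangle lets us get away with adding a single edge $aw$ rather than two edges $aw$ and $bw$ — the latter could push a neighbour of degree $\Delta$ over the limit, which is why the same argument does not eliminate arbitrary $3$-vertices.
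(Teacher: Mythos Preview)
Your proof is correct and follows essentially the same approach as the paper: remove $v$, add the single edge $aw$ (the paper's $v_1v_3$), and count blocked colours. The only cosmetic difference is that the paper exploits the edge $ab$ to sharpen the count to $\Delta + 2(\Delta-1) = 3\Delta-2$ rather than your $3\Delta$, but your bound is already comfortably below $3\Delta+4$, so this changes nothing.
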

\begin{proof}
Suppose that $G$ has a 3-vertex $v$ incident to a 3-face  $[v,v_1,v_2]$ and let $N(v) = \{v_1, v_2, v_3\}$, see \cref{fig:3v_inc_3f}.
Let $G' = G - v + v_1v_3$.
We observe that a distance-2 $(3\Delta + 4)$-coloring of $G'$ blocks at most $\Delta + 2 (\Delta - 1) = 3\Delta - 2 < 3\Delta+4$ colors for $v$.
\end{proof}
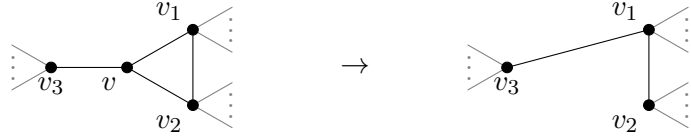
\begin{figure}[h!]
	\begin{center}
	\begin{tikzpicture}
	\coordinate (v) at (0,0);
	\coordinate (v3) at (-1,0);
	\coordinate (v1) at ($(v)+(30:1)$);
	\coordinate (v2) at ($(v)+(-30:1)$);
	\draw (v1) -- (v) -- (v2) -- (v1);
	\draw (v3) -- (v);
	\rightstar{(v1)}{\Delta-2}
	\rightstar{(v2)}{\Delta-2}
	\leftstar{(v3)}{\Delta-1}
	
	\filldraw[black] (v3) circle (2pt) node[anchor=north] {$v_3$};
	\filldraw[black] (v) circle (2pt) node[anchor=north east] {$v$};
	\filldraw[black] (v1) circle (2pt) node[anchor=south east] {$v_1$};
	\filldraw[black] (v2) circle (2pt) node[anchor=north east] {$v_2$};
	
	\node at (3,0) {$\to$};
	
	\coordinate (vp) at (6,0);
	\coordinate (vp3) at (5,0);
	\coordinate (vp1) at ($(vp)+(30:1)$);
	\coordinate (vp2) at ($(vp)+(-30:1)$);
	\draw (vp3) -- (vp1) -- (vp2);
	
	\rightstar{(vp1)}{\Delta-2}
	\rightstar{(vp2)}{\Delta-2}
	\leftstar{(vp3)}{\Delta-1}
	
	\filldraw[black] (vp3) circle (2pt) node[anchor=north] {$v_3$};
	\filldraw[black] (vp1) circle (2pt) node[anchor=south east] {$v_1$};
	\filldraw[black] (vp2) circle (2pt) node[anchor=north east] {$v_2$};
	\end{tikzpicture}
	\end{center}
	\caption{Forbidden configuration in \cref{3v_inc_3f}.}
	\label{fig:3v_inc_3f}
\end{figure}

\begin{claim}\label{3v_inc_2_4f}
$G$ has no 3-vertex incident to two 4-faces.
\end{claim}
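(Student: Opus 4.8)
I would follow exactly the template set up before \cref{delta3}: delete one vertex, add one edge, and bound the number of colors blocked at the deleted vertex. So suppose for contradiction that $v$ is a $3$-vertex incident to two $4$-faces, and write $N(v)=\{v_1,v_2,v_3\}$. Since $G$ is $2$-connected, $v$ is incident to exactly three faces, and any two of them share precisely the edge of $G$ lying between them (one of $vv_1,vv_2,vv_3$); hence, relabelling, I may assume the two $4$-faces incident to $v$ are $f=[v,v_1,x,v_2]$ and $f'=[v,v_2,y,v_3]$ for some vertices $x,y\neq v$ (possibly $x=y$, or one of $x,y$ equal to some $v_i$, or $v_1v_3\in E(G)$; all of these will only help). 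The whole point of the hypothesis is that $v_1,v_2$ already have a common neighbor $x$ and $v_2,v_3$ already have a common neighbor $y$, so after deleting $v$ the only pair among $\{v_1,v_2,v_3\}$ that can drop out of distance $2$ is $\{v_1,v_3\}$. Accordingly I would set $G':=G-v+v_1v_3$.

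Next I would verify properties (i)--(iv) for this $G'$. Property (i) holds because we remove three edges and add at most one; (ii) holds because $\deg_{G'}(v_1)\le\deg_G(v_1)\le\Delta$ and likewise $\deg_{G'}(v_3)\le\Delta$, all other degrees being unchanged; for (iii), deleting $v$ together with $vv_1,vv_2,vv_3$ merges the three faces of $G$ incident to $v$ into a single face of $G-v$ on whose boundary $v_1$ and $v_3$ both lie, so $v_1v_3$ can be drawn inside it and $G'$ is planar; and (iv) is precisely the common-neighbor observation together with the new edge: a shortest $a$--$b$ path in $G$ ($a,b\neq v$) avoiding $v$ survives in $G'$, while one through $v$ joins two of $v_1,v_2,v_3$, and $v_1$--$x$--$v_2$, $v_2$--$y$--$v_3$, $v_1v_3$ handle the three pairs. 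Then, by minimality, $G'$ has a distance-$2$ $(3\Delta+4)$-coloring, which colors all of $V(G)\setminus\{v\}$; the colors blocked for $v$ are those appearing on $\{v_1,v_2,v_3\}\cup\bigcup_i\bigl(N_G(v_i)\setminus\{v\}\bigr)$, and using $x\in N_G(v_1)\cap N_G(v_2)$, $y\in N_G(v_2)\cap N_G(v_3)$ and $\deg_G(v_i)\le\Delta$, this set has size at most $3+(\Delta-1)+(\Delta-2)+(\Delta-2)=3\Delta-2<3\Delta+4$. So a free color exists for $v$; coloring $v$ with it extends the coloring to $G$, a contradiction.

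I do not expect a genuinely hard step here. The one place that needs the stated hypothesis, and a moment's thought, is the interaction of planarity with property (iv): since removing $v$ drops each $\deg_G(v_i)$ by $1$, we can add only one edge at each $v_i$ without risking degree exceeding $\Delta$, so effectively we may add a single new edge among the pairs $\{v_1,v_2\},\{v_2,v_3\},\{v_1,v_3\}$ — and this is enough exactly because the two $4$-faces already supply common neighbors for the other two pairs (with only one $4$-face we would be forced to add two edges and could push a degree-$\Delta$ neighbor over the bound). I would also note explicitly that the degenerate cases ($x=y$; $x$ or $y$ coinciding with $v_1$ or $v_3$; $v_1v_3$ already an edge, so that $G'=G-v$) all fall under the same argument and only sharpen the count, so no separate case analysis is needed.
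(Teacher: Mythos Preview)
Your proposal is correct and follows essentially the same approach as the paper: the paper also labels the two $4$-faces as $[v,v_1,u,v_2]$ and $[v,v_3,w,v_2]$ (your $x,y$ are their $u,w$), sets $G'=G-v+v_1v_3$, and obtains the same blocked-color bound $3\Delta-2$. Your write-up simply spells out properties (i)--(iv) and the degenerate cases that the paper leaves implicit.
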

\begin{proof}
Suppose that $G$ has a 3-vertex $v$ incident to two 4-faces:  $[v,v_1,u,v_2]$ and $[v,v_3,w,v_2]$, see \cref{fig:3v_inc_2_4f}.
Let $G' = G - v + v_1v_3$. 
We observe that a distance-2 $(3\Delta + 4)$-coloring of $G'$ blocks at most $2 (\Delta - 1) + \Delta = 3\Delta - 2 < 3\Delta+4$ colors for $v$.
\end{proof}
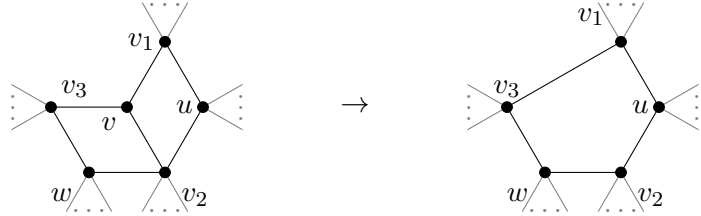
\begin{figure}[h!]
	\begin{center}
		\begin{tikzpicture}
		\coordinate (v) at (0,0);
		\coordinate (v1) at ($(v)+(60:1)$);
		\coordinate (v2) at ($(v)+(-60:1)$);
		\coordinate (v3) at ($(v)+(-1,0)$);
		
		\coordinate (u) at ($(v)+(1,0)$);
		\coordinate (w) at ($(v)+(-120:1)$);
		
		\draw (v1) -- (v) -- (v2) -- (u) -- (v1);
		\draw (v) -- (v3) -- (w) -- (v2); 
		
		\upstar{(v1)}{\Delta-2}
		\leftstar{(v3)}{\Delta-2}
		\rightstar{(u)}{\Delta-2}
		\downstar{(v2)}{\Delta-3}
		\downstar{(w)}{\Delta-2}
		
		\filldraw[black] (v) circle (2pt) node[anchor=north east] {$v$};
		\filldraw[black] (v1) circle (2pt) node[anchor=east] {$v_1$};
		\filldraw[black] (v2) circle (2pt) node[inner sep=6pt,anchor=north west] {$v_2$};
		\filldraw[black] (v3) circle (2pt) node[anchor=south west] {$v_3$};
		\filldraw[black] (u) circle (2pt) node[anchor=east] {$u$};
		\filldraw[black] (w) circle (2pt) node[inner sep=6pt,anchor=north east] {$w$};
		
		\node at (3,0) {$\to$};
		
		\coordinate (vp) at (6,0);
		\coordinate (vp1) at ($(vp)+(60:1)$);
		\coordinate (vp2) at ($(vp)+(-60:1)$);
		\coordinate (vp3) at ($(vp)+(-1,0)$);
		
		\coordinate (up) at ($(vp)+(1,0)$);
		\coordinate (wp) at ($(vp)+(-120:1)$);
		
		\draw (vp2) -- (up) -- (vp1) -- (vp3) -- (wp) -- (vp2); 
		
		\upstar{(vp1)}{\Delta-2}
		\leftstar{(vp3)}{\Delta-2}
		\rightstar{(up)}{\Delta-2}
		\downstar{(vp2)}{\Delta-3}
		\downstar{(wp)}{\Delta-2}

		\filldraw[black] (vp1) circle (2pt) node[inner sep=6pt,anchor=south east] {$v_1$};
		\filldraw[black] (vp2) circle (2pt) node[inner sep=6pt,anchor=north west] {$v_2$};
		\filldraw[black] (vp3) circle (2pt) node[anchor=south] {$v_3$};
		\filldraw[black] (up) circle (2pt) node[anchor=east] {$u$};
		\filldraw[black] (wp) circle (2pt) node[inner sep=6pt,anchor=north east] {$w$};
		
		\end{tikzpicture}
	\end{center}
		\caption{Forbidden configuration in \cref{3v_inc_2_4f}.}\label{fig:3v_inc_2_4f}
\end{figure}

\begin{claim}\label{4v_inc_3f_466}
If a 4-vertex of $G$ is incident to a 3-face,
then the other two vertices on that 3-face are $6^+$-vertices.
\end{claim}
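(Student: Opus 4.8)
The plan is to follow the contradiction template fixed before \cref{delta3}. Suppose the claim fails: there is a $4$-vertex $v$ incident to a $3$-face $[v,v_1,v_2]$ in which one of $v_1,v_2$ is a $5^-$-vertex; relabeling if necessary, assume $\deg(v_1)\le 5$ (the two cases are symmetric, so this also settles $v_2$). Write $N(v)=\{v_1,v_2,v_3,v_4\}$, and note that $v_1v_2\in E(G)$ because $[v,v_1,v_2]$ is a face. I would then define $G' := G - v + v_1v_3 + v_1v_4$, adding only whichever of these two edges is not already present.

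Next I would check properties (i)--(iv) for $G'$. Deleting $v$ lowers $|V|+|E|$ by $5$ and we add at most two edges, giving (i); the only degree that increases is that of $v_1$, and it increases by at most $1$, so since $\deg(v_1)\le 5\le\Delta-1$ (as $\Delta\ge 6$) we get (ii); as $v_1$ and $v_2$ are consecutive in the rotation at $v$, the two new edges can be routed inside the face obtained by deleting $v$, giving (iii); and for (iv), the only distance-$\le 2$ pairs among $V(G)\setminus\{v\}$ that a path of length $2$ through $v$ could have witnessed lie inside $N(v)$, but in $G'$ the vertex $v_1$ is adjacent to all of $v_2,v_3,v_4$, so every such pair is still at distance at most $2$. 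Minimality of $G$ then yields a distance-$2$ $(3\Delta+4)$-coloring $\varphi$ of $G'$, which we keep on $V(G)\setminus\{v\}$.

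It remains to count the colors blocked for $v$, i.e.\ those appearing on $N(v)\cup\bigcup_{i=1}^{4}\bigl(N(v_i)\setminus\{v\}\bigr)$. Here $|N(v)|=4$; since $v_2\in N(v_1)$, the set $N(v_1)\setminus\{v\}$ contributes at most $\deg(v_1)-2\le 3$ vertices not already in $N(v)$; since $v_1\in N(v_2)$, the set $N(v_2)\setminus\{v\}$ contributes at most $\deg(v_2)-2\le\Delta-2$; and each of $N(v_3)\setminus\{v\}$ and $N(v_4)\setminus\{v\}$ contributes at most $\Delta-1$. Thus at most $4+3+(\Delta-2)+(\Delta-1)+(\Delta-1)=3\Delta+3<3\Delta+4$ colors are blocked for $v$, so a free color exists; coloring $v$ with it extends $\varphi$ to $G$, a contradiction.

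The only subtle point — and the place I would be most careful — is honoring property (i) while still achieving property (iv): we must kill every length-$2$ path through $v$ among $v_1,\dots,v_4$ using at most two new edges. This is precisely where the $3$-face hypothesis pulls double duty: the free edge $v_1v_2$ means that adding just $v_1v_3$ and $v_1v_4$ makes $v_1$ a common neighbor of $v_2,v_3,v_4$, and that same edge also lets us shave two vertices off the crude second-neighborhood bound, improving $3\Delta+5$ to the needed $3\Delta+3$. I do not anticipate any genuine difficulty beyond this bookkeeping and the (routine) planar routing of the two added edges.
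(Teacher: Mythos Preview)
Your proposal is correct and follows essentially the same approach as the paper: the same reduction $G' = G - v + v_1v_3 + v_1v_4$ and the same final bound of $3\Delta+3$ blocked colors (the paper groups the count as $2\Delta + (\Delta-1) + 4$, but it is the same quantity). Your explicit verification of (i)--(iv) is more detailed than the paper's, which simply declares them straightforward.
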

\begin{proof}
Suppose that $G$ has a 4-vertex $v$ incident to a 3-face $[v,v_1,v_2]$, where $\deg(v_1) \leq 5$ and $N(v) = \{v_1, v_2, v_3, v_4\}$, see \cref{fig:4v_inc_3f_466}.
Let $G' = G - v + v_1v_3 + v_1v_4$.
We observe that a distance-2 $(3\Delta + 4)$-coloring of $G'$ blocks at most $2 \Delta + (\Delta - 1) + 4= 3\Delta + 3 < 3\Delta+4$ colors for $v$.
\end{proof}
\begin{figure}[h!]
	\begin{center}
		\begin{tikzpicture}
		\coordinate (v) at (0,0);
		\coordinate (v1) at (-1,0);
		\coordinate (v2) at (0,-1);
		\coordinate (v3) at (1,0);
		\coordinate (v4) at (0,1);
		
		\draw (v) -- (v1) -- (v2) -- (v) -- (v3);
		\draw (v) -- (v4);
		
		\upstar{(v4)}{\Delta-1}
		\downstar{(v2)}{\Delta-2}
		\rightstar{(v3)}{\Delta-1}
		
		\draw[gray] (v1) -- ($(v1)+(150:0.6)$);
		\draw[gray] ($(v1)+(-0.6,0)$) -- (v1) -- ($(v1)+(210:0.6)$);
		
		\filldraw[black] (v) circle (2pt) node[anchor=north east] {$v$};
		\filldraw[black] (v1) circle (2pt) node[anchor=north] {$v_1$};
		\filldraw[black] (v2) circle (2pt) node[inner sep=6pt,anchor=north west] {$v_2$};
		\filldraw[black] (v3) circle (2pt) node[anchor=north east] {$v_3$};
		\filldraw[black] (v4) circle (2pt) node[anchor=north east] {$v_4$};
		
		\node at (3,0) {$\to$};
		
		\coordinate (v) at (6,0);
		\coordinate (v1) at (5,0);
		\coordinate (v2) at (6,-1);
		\coordinate (v3) at (7,0);
		\coordinate (v4) at (6,1);
		
		\draw (v1) -- (v2);
		\draw (v3) -- (v1) -- (v4);
		
		\upstar{(v4)}{\Delta-1}
		\downstar{(v2)}{\Delta-2}
		\rightstar{(v3)}{\Delta-1}
		
		\draw[gray] (v1) -- ($(v1)+(150:0.6)$);
		\draw[gray] ($(v1)+(-0.6,0)$) -- (v1) -- ($(v1)+(210:0.6)$);

		\filldraw[black] (v1) circle (2pt) node[anchor=north] {$v_1$};
		\filldraw[black] (v2) circle (2pt) node[inner sep=6pt,anchor=north west] {$v_2$};
		\filldraw[black] (v3) circle (2pt) node[anchor=north east] {$v_3$};
		\filldraw[black] (v4) circle (2pt) node[anchor=north west] {$v_4$};
		\end{tikzpicture}
	\end{center}
	\caption{Forbidden configuration in \cref{4v_inc_3f_466}.}	\label{fig:4v_inc_3f_466}
\end{figure}
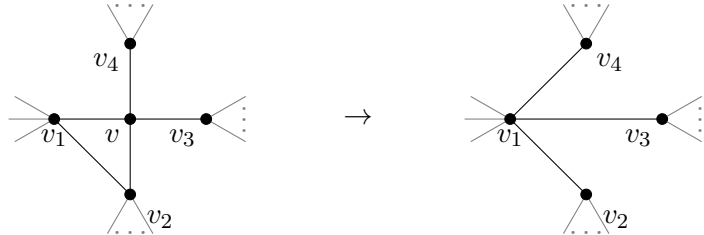

\begin{claim}\label{4v_2_3f_8v}
If $G$ has two 3-faces that share an edge $uv$ on their boundaries and $\deg(u)=4$, then $\deg(v) \geq 8$.
\end{claim}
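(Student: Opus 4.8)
The plan is to follow the standard scheme set up before the forbidden configurations, but applied to the $4$-vertex $u$ rather than to $v$. Suppose toward a contradiction that $\deg(v)\le 7$. Let $[u,v,x]$ and $[u,v,y]$ be the two $3$-faces sharing the edge $uv$; by \cref{clm:2connected} these faces are triangles, so $u\sim x\sim v$ and $u\sim y\sim v$, and $x\ne y$. Since $\deg(u)=4$, the vertex $u$ has exactly one further neighbour $t\notin\{v,x,y\}$, so $N(u)=\{v,x,y,t\}$. I would set $G'=G-u+vt$, obtain from the minimality of $G$ a distance-$2$ $(3\Delta+4)$-coloring $\varphi$ of $G'$, keep it unchanged on $V(G)\setminus\{u\}$, and then find a color for $u$.

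Verifying properties (i)--(iii) is immediate: deleting $u$ removes four edges and adding $vt$ adds at most one; $v$ and $t$ each lose their edge to $u$, so neither exceeds degree $\Delta$ after $vt$ is inserted; and $vt$ can be drawn inside the face left behind after $u$ is deleted. The substance is property (iv). The only pairs of vertices of $V(G)\setminus\{u\}$ whose shortest connection in $G$ passes through $u$ lie inside $N(u)=\{v,x,y,t\}$: among these $vx$ and $vy$ are already edges, $xy$ stays at distance $2$ through $v$, and $xt$, $yt$ stay at distance $2$ through $v$ once the edge $vt$ is present. Hence $G'$ has all the required properties, and by minimality it admits a distance-$2$ $(3\Delta+4)$-coloring.

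It then remains to count the colors blocked for $u$, that is, those appearing on vertices at distance at most $2$ from $u$ in $G$. There are $|N(u)|=4$ neighbours, and every vertex at distance exactly $2$ from $u$ lies in $N(w)\setminus N[u]$ for some $w\in N(u)$; using $N(v)\supseteq\{u,x,y\}$, $N(x)\supseteq\{u,v\}$, $N(y)\supseteq\{u,v\}$, and $N(t)\ni u$, the number of such vertices is at most $(\deg(v)-3)+(\deg(x)-2)+(\deg(y)-2)+(\deg(t)-1)$. Altogether at most $\deg(v)+\deg(x)+\deg(y)+\deg(t)-4\le 7+3\Delta-4=3\Delta+3<3\Delta+4$ colors are blocked, so $u$ has a free color, contradicting that $G$ is a counterexample; hence $\deg(v)\ge 8$. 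I expect the main obstacle to be the bookkeeping in property (iv): one must check that a single new edge $vt$ (rather than several) already restores every distance within $N(u)$, while simultaneously that this one edge does not push $\deg(v)$ or $\deg(t)$ above $\Delta$ — both points work precisely because $u$ is only a $4$-vertex, so that $x$ and $y$ are common neighbours of $u$ and $v$.
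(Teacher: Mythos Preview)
Your proof is correct and follows essentially the same approach as the paper: remove the $4$-vertex $u$, add the single edge from $v$ to the remaining neighbour $t$ of $u$, and count that at most $3\Delta+3$ colors are blocked for $u$. Your verification of property~(iv) is more explicit than the paper's, but the construction and the arithmetic match.
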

\begin{proof}
Suppose that $G$ has a 4-vertex $u$ incident to two adjacent 3-faces: $[v,v_1,u]$ and $[v,v_2,u]$, see \cref{fig:4v_2_3f_8v}.
where $\deg(v) \leq 7$ and $N(u) = \{u, v_1, v_2, v_3\}$. Let $G' = G - u + vv_3$. 
We observe that a distance-2 $(3\Delta + 4)$-coloring of $G'$ blocks at most $\Delta + 2 (\Delta - 1) + 5  = 3\Delta + 3 < 3\Delta+4$ colors for $u$.
\end{proof}
\begin{figure}[h!]
	\begin{center}
		\begin{tikzpicture}
		\coordinate (v) at (0,0);
		\coordinate (v1) at (-1,0);
		\coordinate (v2) at (0,-1);
		\coordinate (v3) at (1,0);
		\coordinate (v4) at (0,1);
		
		\draw (v) -- (v1) -- (v2) -- (v) -- (v3);
		\draw (v) -- (v4) -- (v1);
		
		\upstar{(v4)}{\Delta-2}
		\downstar{(v2)}{\Delta-2}
		\rightstar{(v3)}{\Delta-1}
		
		\draw[gray] ($(v1)+(190:0.6)$) -- (v1) -- ($(v1)+(150:0.6)$);
		\draw[gray] ($(v1)+(170:0.6)$) -- (v1) -- ($(v1)+(210:0.6)$);
		
		\filldraw[black] (v) circle (2pt) node[anchor=north east] {$u$};
		\filldraw[black] (v1) circle (2pt) node[anchor=north] {$v$};
		\filldraw[black] (v2) circle (2pt) node[inner sep=6pt,anchor=north west] {$v_2$};
		\filldraw[black] (v3) circle (2pt) node[anchor=north east] {$v_3$};
		\filldraw[black] (v4) circle (2pt) node[anchor=north west] {$v_1$};
		
		\node at (3,0) {$\to$};
		
		\coordinate (v) at (6,0);
		\coordinate (v1) at (5,0);
		\coordinate (v2) at (6,-1);
		\coordinate (v3) at (7,0);
		\coordinate (v4) at (6,1);
		
		\draw (v1) -- (v2);
		\draw (v3) -- (v1) -- (v4);
		
		\upstar{(v4)}{\Delta-1}
		\downstar{(v2)}{\Delta-2}
		\rightstar{(v3)}{\Delta-1}
		
		\draw[gray] ($(v1)+(190:0.6)$) -- (v1) -- ($(v1)+(150:0.6)$);
		\draw[gray] ($(v1)+(170:0.6)$) -- (v1) -- ($(v1)+(210:0.6)$);

		\filldraw[black] (v1) circle (2pt) node[anchor=north] {$v$};
		\filldraw[black] (v2) circle (2pt) node[inner sep=6pt,anchor=north west] {$v_2$};
		\filldraw[black] (v3) circle (2pt) node[anchor=north east] {$v_3$};
		\filldraw[black] (v4) circle (2pt) node[anchor=north west] {$v_1$};
		\end{tikzpicture}
	\end{center}
\caption{Forbidden configuration in \cref{4v_2_3f_8v}.}	\label{fig:4v_2_3f_8v}
\end{figure}
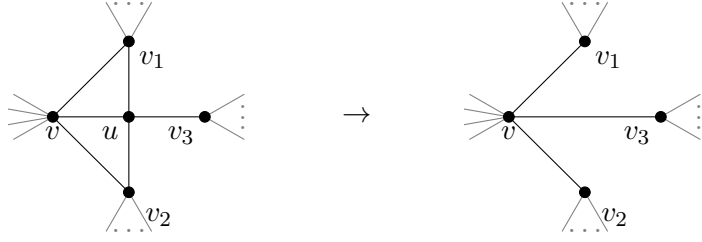

\cref{4v_2_3f_8v} implies two statements that will be directly used in our proof. Let us start with some definitions.
A 3-face is \emph{weird} if it is incident to a 4-vertex and two $6^+$-vertices.
Consider a vertex $v$ and let $f_0,f_1,\ldots,f_{d-1}$ be the sequence of faces incident to $v$ in the cyclic ordering around $v$ in our fixed plane embedding of $G$. Arithmetic  operations on indices will be performed modulo $d$.
A \emph{fan centered at $v$} is a sequence $\mathbf{f} = f_i,f_{i+1},\ldots,f_{i+p}$, such that (i) each element of $\mathbf{f}$ is a 3-face, and (ii) $f_{i-1}$ and $f_{i+p+1}$ are not 3-faces (it is possible that $f_{i-1}=f_{i+p+1}$); see \cref{fig:fan}.
The faces $f_i$ and $f_{i+p}$ are \emph{outer faces} of $\mathbf{f}$.
By the vertices of $\mathbf{f}$ we mean the vertices incident to the faces of $\mathbf{f}$, except for $v$.
The vertices of $f_i$ and $f_{i+p}$ that are not shared with other elements of $\mathbf{f}$ are \emph{outer vertices} of $\mathbf{f}$.
\begin{figure}[h!]
	\begin{center}	
		\begin{tikzpicture}
		\coordinate (v) at (0,0);
		\fill[gray] (v) -- ($(v)+(45*5:1.5)$) -- ($(v)+(45*6:1.5)$);
		\draw ($(v)+(45*5:1.5)$) -- ($(v)+(45*6:1.5)$);

		\leftstar{($(v)+(45*3:1.5)$)}{}
		\leftstar{($(v)+(45*4:1.5)$)}{}
		\leftstar{($(v)+(45*5:1.5)$)}{}

		\rightstar{($(v)+(45*7:1.5)$)}{}
		\rightstar{($(v)+(45*8:1.5)$)}{}
		\rightstar{($(v)+(45*1:1.5)$)}{}
		
		\upstar{($(v)+(45*2:1.5)$)}{}
		
		\downstar{($(v)+(45*6:1.5)$)}{}
		
		\foreach \x in {1,4}
		{
			
			\fill[gray] (v) -- ($(v)+(45*\x-45:1.5)$) -- ($(v)+(45*\x:1.5)$);
			\draw ($(v)+(45*\x-45:1.5)$) -- ($(v)+(45*\x:1.5)$);
		}
		\foreach \x in {2,3}
		{
			
			\fill[lightgray] (v) -- ($(v)+(45*\x-45:1.5)$) -- ($(v)+(45*\x:1.5)$);
			\draw ($(v)+(45*\x-45:1.5)$) -- ($(v)+(45*\x:1.5)$);
		}
		\foreach \x in {1,2,3,4,5,6,7,8}
		{
			\draw (v) -- ($(v)+(45*\x:1.5)$);
			\filldraw[black] ($(v)+(45*\x:1.5)$) circle (2pt);
		}
		\filldraw[black] ([xshift=-2pt,yshift=-2pt]$(v)+(45*8:1.5)$) rectangle ++(4pt,4pt);
		\filldraw[black] ([xshift=-2pt,yshift=-2pt]$(v)+(45*4:1.5)$) rectangle ++(4pt,4pt);
		\filldraw[black] ([xshift=-2pt,yshift=-2pt]$(v)+(45*5:1.5)$) rectangle ++(4pt,4pt);
		\filldraw[black] ([xshift=-2pt,yshift=-2pt]$(v)+(45*6:1.5)$) rectangle ++(4pt,4pt);
		\filldraw (v) circle (2pt) node[anchor=north west,inner xsep=9pt] {$v$};
		
		\end{tikzpicture}
	\end{center}
	\caption{Fans centered at a vertex $v$ are shaded and their outer faces are darker.
		Outer vertices are marked with squares.}\label{fig:fan}
\end{figure}
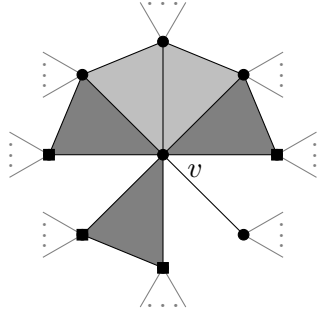

\begin{claim}\label{noweirdface}
If $v$ is a $7^-$ vertex and all faces incident to $v$ are 3-faces, then $v$ is not incident to a weird 3-face.
\end{claim}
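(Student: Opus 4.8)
The plan is to assume, for contradiction, that $v$ is a $7^-$-vertex all of whose incident faces are $3$-faces and which lies on a weird $3$-face $f$, and to extract a contradiction from the claims already proved. Since $v$ is incident to a $3$-face, \cref{3v_inc_3f} gives $\deg(v)\ge 4$, so $d:=\deg(v)\in\{4,5,6,7\}$. As every face around $v$ is a $3$-face, the neighbours of $v$ occur as a cycle $v_1,\dots,v_d$ (indices taken mod $d$) with $v_jv_{j+1}\in E(G)$ for all $j$, and $f=[v,v_j,v_{j+1}]$ for some $j$. Being weird, exactly one of $v,v_j,v_{j+1}$ has degree $4$ and the other two have degree at least $6$.

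The easy case is $d\ge 5$. Then $v$ cannot be the degree-$4$ vertex of $f$, so, swapping $v_j$ and $v_{j+1}$ if necessary, we may assume $\deg(v_j)=4$. The edge $vv_j$ lies on the boundary of exactly the two faces incident to $v$ at that edge, namely $[v,v_{j-1},v_j]$ and $[v,v_j,v_{j+1}]$, and both are $3$-faces. Hence $vv_j$ is shared by two $3$-faces whose common endpoint $v_j$ has degree $4$, and \cref{4v_2_3f_8v} forces the other endpoint to satisfy $\deg(v)\ge 8$, contradicting $d\le 7$. This settles $d\in\{5,6,7\}$ uniformly.

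It remains to treat $d=4$. Now the two non-$v$ vertices of $f$ have degree $\ge 6>4=\deg(v)$, so $v$ itself is the degree-$4$ vertex of $f$ and $\deg(v_j),\deg(v_{j+1})\ge 6$. Moreover, for each $i\in\{1,2,3,4\}$ the edge $vv_i$ is shared by the two $3$-faces $[v,v_{i-1},v_i]$ and $[v,v_i,v_{i+1}]$, so \cref{4v_2_3f_8v}, applied with the degree-$4$ endpoint $v$, gives $\deg(v_i)\ge 8$. Thus in this case the claim reduces to showing that no $4$-vertex can have all four incident faces being $3$-faces. This is ruled out by a forbidden-configuration argument in the style of \cref{sec:configurations}: set $G':=G-v$. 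Since $v_1v_2v_3v_4$ is a $4$-cycle in $G-v$, any two neighbours of $v$ stay at distance at most $2$ in $G'$, so property~(iv) holds (and (i)--(iii) are immediate), and the set of colours blocked for $v$ lies in $\{v_1,v_2,v_3,v_4\}\cup\bigcup_{i=1}^{4}\bigl(N_G(v_i)\setminus\{v,v_{i-1},v_{i+1}\}\bigr)$, of size at most $\left|\{v_1,v_2,v_3,v_4\}\right|+\sum_{i=1}^4(\deg(v_i)-3)=\sum_{i=1}^4\deg(v_i)-8$, which has to be pushed below $3\Delta+4$.

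The main obstacle is exactly this last bound. The $d=4$ configuration has the largest distance-$2$ neighbourhood among the configurations excluded in this section, and the crude estimate $\sum_i\deg(v_i)-8\le 4\Delta-8$ is by itself not strong enough for large $\Delta$; one has to recover the missing slack from the extra facts that all four neighbours have degree at least $8$ (so the configuration only occurs when $\Delta\ge 8$) and that the sets $N_G(v_i)$, together with the edges of the $4$-cycle, force overlaps and colour coincidences. Everything else in the proof — the split into $d=4$ versus $d\ge5$ and the whole $d\ge5$ case — is short and uses only \cref{3v_inc_3f} and \cref{4v_2_3f_8v}.
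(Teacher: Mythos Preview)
Your argument for $d\ge 5$ is correct and is exactly the paper's one-line proof: since every edge $vv_j$ lies on two $3$-faces and $\deg(v)\le 7$, \cref{4v_2_3f_8v} forbids any neighbour of $v$ from having degree~$4$; as $\deg(v)\ne 4$, the $4$-vertex of a putative weird face incident to $v$ would have to be a neighbour, a contradiction.

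The difficulty you isolate at $d=4$ is real, and your reducibility attempt does not close it: the count $\sum_i\deg(v_i)-8\le 4\Delta-8$ only beats $3\Delta+4$ when $\Delta<12$, and planarity by itself does not force enough overlap among the $N(v_i)$ to recover the missing $\Delta-12$. In fact the paper's own proof silently assumes $\deg(v)\ne 4$ (the implication ``no $4$-neighbour $\Rightarrow$ no incident weird face'' fails precisely when $v$ itself is the $4$-vertex), and none of the reducible configurations in \cref{sec:configurations} excludes a $4$-vertex surrounded by four $3$-faces with all neighbours of degree $\ge 8$. So the claim as literally stated is stronger than what is established.

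This is harmless for the paper: \cref{noweirdface} is only invoked in the discharging analysis for $6$- and $7$-vertices, where $d\in\{6,7\}$ and your $d\ge 5$ argument already suffices. The honest fix is to tighten the hypothesis to $\deg(v)\in\{5,6,7\}$ (or just $\{6,7\}$), after which the one-line proof is complete. Your write-up would be improved by noting this rather than trying to force the $d=4$ case through.
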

\begin{proof}
By \cref{4v_2_3f_8v} we observe that $v$ is adjacent to no 4-vertex and thus is not incident to a weird 3-face.
\end{proof}

\begin{claim}\label{weirdouter}
Let $v$ be a $7^-$ vertex and let $\mathbf{f}$ be a fan centered at $v$.
If $\mathbf{f}$ contains a weird 3-face $f$, then $f$ is an outer face of $\mathbf{f}$ and the 4-vertex of $f$ is an outer vertex of $\mathbf{f}$.
\end{claim}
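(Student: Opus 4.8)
The plan is to follow the unique $4$-vertex of the weird face around the fan and show that it is forced to one of the two ends. Write $\mathbf{f} = f_i, f_{i+1}, \ldots, f_{i+p}$ and let $u_0, u_1, \ldots, u_{p+1}$ be the neighbours of $v$ incident to the faces of $\mathbf{f}$, listed in the cyclic order around $v$, so that $f_{i+k}$ is the $3$-face on $\{v, u_k, u_{k+1}\}$ for each $0 \le k \le p$. Since $\mathbf{f}$ is a fan, $v$ is incident to a non-$3$-face, so $p+1 \le \deg(v)-1$ and the $p+2$ vertices $u_0, \ldots, u_{p+1}$ are pairwise distinct. With this labelling the outer faces of $\mathbf{f}$ are $f_i$ and $f_{i+p}$, and the outer vertices of $\mathbf{f}$ are precisely $u_0$ and $u_{p+1}$. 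Suppose $f = f_{i+k}$ is weird; then among $v, u_k, u_{k+1}$ exactly one vertex, call it $x$, has degree $4$, and the other two are $6^+$-vertices.

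First I would rule out that $x$ is a non-outer vertex of $\mathbf{f}$, that is, that $x = u_\ell$ for some $\ell$ with $1 \le \ell \le p$. For such $\ell$ the edge $v u_\ell$ lies on the boundary of both $f_{i+\ell-1}$ and $f_{i+\ell}$, and since $1 \le \ell \le p$ both of these faces belong to $\mathbf{f}$ and hence are $3$-faces. Then \cref{4v_2_3f_8v}, applied to the edge $v u_\ell$ with $u_\ell$ in the role of the degree-$4$ endpoint, forces $\deg(v) \ge 8$, contradicting that $v$ is a $7^-$ vertex. One also needs $x \ne v$, i.e.\ that a degree-$4$ vertex is never the centre of a fan carrying a weird face; see the last paragraph. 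Granting these two facts, $x \in \{u_0, u_{p+1}\}$, so $x$ is an outer vertex. Finally $x$ is one of the two non-$v$ vertices $u_k, u_{k+1}$ of $f$, so $x = u_0$ forces $k=0$ and $x = u_{p+1}$ forces $k=p$; in both cases $f$ is an outer face of $\mathbf{f}$, as claimed.

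The step I expect to be the real obstacle is excluding the possibility $\deg(v)=4$, where $v$ itself could a priori be the $4$-vertex of a weird face of its own fan; the rest is just unwinding the fan terminology and a single application of \cref{4v_2_3f_8v} to the edge from $v$ to the $4$-vertex. To handle it I would use that, by \cref{4v_inc_3f_466}, every $3$-face incident to a degree-$4$ vertex already has its two other vertices of degree at least $6$, and then combine this with \cref{4v_2_3f_8v} at $v$ to rule this fan configuration out, reducing every surviving case to the one treated above.
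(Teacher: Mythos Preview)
Your main argument is correct and is exactly what the paper does: once the $4$-vertex $x$ of the weird face is known to be a neighbour of $v$ rather than $v$ itself, if $x$ is a non-outer vertex then the edge $vx$ lies on two $3$-faces of the fan, and \cref{4v_2_3f_8v} forces $\deg(v)\ge 8$, contradicting the hypothesis. The paper's one-line proof is precisely this observation, without your careful unpacking of the fan notation.

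Where your proposal diverges is the case $\deg(v)=4$, which you rightly flag. Your proposed fix, however, cannot succeed: the claim as stated is actually false there. If $\deg(v)=4$ and $\mathbf f$ is any fan centred at $v$, then every $3$-face $[v,u_k,u_{k+1}]$ of $\mathbf f$ has $u_k,u_{k+1}$ of degree at least $6$ by \cref{4v_inc_3f_466}, so every face of $\mathbf f$ is weird with $4$-vertex $v$; but $v$ is by definition never an outer vertex of $\mathbf f$. Neither \cref{4v_inc_3f_466} nor \cref{4v_2_3f_8v} excludes such a configuration in $G$ (the latter only forces the non-outer $u_k$'s to have degree $\ge 8$, which is no contradiction). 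So there is nothing to ``rule out''.

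The paper's proof makes the same tacit assumption that the $4$-vertex is a neighbour of $v$, without comment; in effect both the paper and you have noticed that the real content of the claim lives in the range $\deg(v)\in\{6,7\}$, where $v$ is necessarily one of the two $6^+$-vertices of the weird face. This is harmless for the main theorem, since \cref{weirdouter} is only invoked in the discharging analysis of $6$- and $7$-vertices (for $\deg(v)=5$ no weird face can contain $v$, and for $\deg(v)\le 4$ the rule \ref{R2} does not apply). The clean fix is to weaken the hypothesis to ``$v$ is a $6$- or $7$-vertex'' rather than to attempt to salvage the $\deg(v)=4$ case.
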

\begin{proof}
Observe that if the 4-vertex $v$ of $f$ is not an outer vertex of $\mathbf{f}$, then the edge $uv$ belongs to the boundaries of two 3-faces, which contradicts \cref{4v_2_3f_8v}.
\end{proof}

\begin{claim}\label{5v_inc_5_3f}
If a 5-vertex of $G$ is incident to five 3-faces, then it has at least four $7^+$-vertices as neighbors. 
\end{claim}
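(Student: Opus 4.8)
The plan is to reuse the deletion-and-count template from the earlier forbidden-configuration claims. Assume towards a contradiction that $G$ has a $5$-vertex $v$ incident to five $3$-faces but with at most three $7^+$-vertices among its neighbours, so at least two of its five neighbours are $6^-$-vertices. Since $G$ is $2$-connected (\cref{clm:2connected}), every face is a cycle; write $v_1,\dots,v_5$ for the neighbours of $v$ in the cyclic order of the fixed embedding. Then the five $3$-faces incident to $v$ are precisely the triangles $[v,v_i,v_{i+1}]$ (indices modulo $5$), so in particular every $v_iv_{i+1}$ is an edge and $v_1v_2v_3v_4v_5$ is a $5$-cycle.

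Next I would take $G' := G - v$, adding no new edges, and verify properties (i)--(iv). Properties (i)--(iii) are immediate. For property (iv), a pair of vertices of $V(G)\setminus\{v\}$ that is at distance at most $2$ in $G$ but not in $G'$ must be joined in $G$ by a path of length $2$ whose middle vertex is $v$, hence both endpoints lie in $N(v)=\{v_1,\dots,v_5\}$; but consecutive $v_i$'s are adjacent and each "diagonal" pair $\{v_i,v_{i+2}\}$ is joined by the length-$2$ path $v_iv_{i+1}v_{i+2}$ inside the $5$-cycle, so every such pair is still at distance at most $2$ in $G'$. By minimality of $G$, the graph $G'$ admits a distance-$2$ $(3\Delta+4)$-coloring $\varphi$, and colouring $V(G)\setminus\{v\}$ by $\varphi$ is a valid partial distance-$2$ colouring of $G$; it only remains to find a colour for $v$.

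Finally I would bound the number of colours blocked for $v$ by the number of vertices at distance at most $2$ from $v$ in $G$. Besides $v_1,\dots,v_5$, each $v_i$ contributes at most $\deg(v_i)-3$ further such vertices, since $v_i$ is adjacent to the three vertices $v,v_{i-1},v_{i+1}$, all of which lie in $\{v,v_1,\dots,v_5\}$. Hence at most $\sum_{i=1}^{5}\deg(v_i)-10$ colours are blocked. Under the contrary assumption at least two of the $v_i$ have degree at most $6$ and the remaining ones have degree at most $\Delta$, so $\sum_{i=1}^{5}\deg(v_i)\le 3\Delta+12$ and therefore at most $3\Delta+2<3\Delta+4$ colours are blocked. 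Thus a free colour for $v$ exists, yielding a distance-$2$ $(3\Delta+4)$-coloring of $G$ and contradicting the choice of $G$.

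I do not expect a genuine obstacle: this is one of the simpler configurations precisely because no edges need to be added to $G$. The single point deserving care is property (iv), which relies on the $5$-cycle $v_1\cdots v_5$ having diameter $2$; I would double-check that, and also remark that although \cref{3v_inc_3f} and \cref{4v_2_3f_8v} already force every $v_i$ to be a $5^+$-vertex, the counting above never uses this — the hypothesis "at most three $7^+$-neighbours" is what does all the work.
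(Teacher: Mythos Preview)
Your proof is correct and follows essentially the same approach as the paper: set $G' = G - v$ with no added edges, use the $5$-cycle on $N(v)$ to verify that distance-$2$ pairs are preserved, and bound the blocked colours by $\sum_i \deg(v_i) - 10 \le 3\Delta + 2$. The paper presents the same construction and the same count (written as $2\cdot 4 + 3(\Delta-2)$), though it leaves the verification of property~(iv) implicit; your explicit check via the diameter-$2$ property of the $5$-cycle is a welcome addition.
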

\begin{proof}
Suppose that $G$ has a 5-vertex $v$ incident to five 3-faces, see \cref{fig:5v_inc_5_3f}.
Let $v_1,v_2$ be distinct $6^-$-vertices in $N(v)$.
Let $G' = G - v$. 
We observe that a distance-2 $(3\Delta + 4)$-coloring of $G'$ blocks at most $2 \cdot 4 + 3 (\Delta - 2) = 3\Delta + 2 < 3\Delta+4$ colors for $v$.
\end{proof}
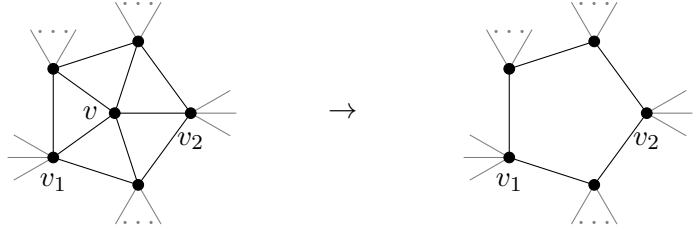
\begin{figure}[h!]
		\begin{center}
		\begin{tikzpicture}
		\coordinate (v) at (0,0);
		\foreach \x in {1,2,3,4,5}
		{
			\coordinate (v\x) at ($(v)+(72*\x:1)$);
			\draw (v) -- (v\x);
		}
		
		\upstar{(v1)}{\Delta-3}
		\upstar{(v2)}{\Delta-3}
		\downstar{(v4)}{\Delta-3}
		
		\draw (v1) -- (v2) -- (v3) -- (v4) -- (v5) -- (v1);
		
		\draw[gray] (v3) -- ($(v3)+(150:0.6)$);
		\draw[gray] ($(v3)+(-0.6,0)$) -- (v3) -- ($(v3)+(210:0.6)$);
		
		\draw[gray] (v5) -- ($(v5)+(30:0.6)$);
		\draw[gray] ($(v5)+(0.6,0)$) -- (v5) -- ($(v5)+(-30:0.6)$);
		
		\filldraw[black] (v) circle (2pt) node [inner sep=6pt,anchor=east] {$v$};
		\filldraw[black] (v1) circle (2pt);
		\filldraw[black] (v2) circle (2pt);
		\filldraw[black] (v3) circle (2pt) node [inner sep=6pt,anchor=north] {$v_1$};
		\filldraw[black] (v4) circle (2pt);
		\filldraw[black] (v5) circle (2pt) node [inner sep=7pt,anchor=north] {$v_2$};
		
		\node at (3,0) {$\to$};
		
		\coordinate (v) at (6,0);
		\foreach \x in {1,2,3,4,5}
		{
			\coordinate (v\x) at ($(v)+(72*\x:1)$);
			
		}
		
		\draw (v1) -- (v2) -- (v3) -- (v4) -- (v5) -- (v1);
		
		\upstar{(v1)}{\Delta-3}
		\upstar{(v2)}{\Delta-3}
		\downstar{(v4)}{\Delta-3}
		
		\draw[gray] (v3) -- ($(v3)+(150:0.6)$);
		\draw[gray] ($(v3)+(-0.6,0)$) -- (v3) -- ($(v3)+(210:0.6)$);
		
		\draw[gray] (v5) -- ($(v5)+(30:0.6)$);
		\draw[gray] ($(v5)+(0.6,0)$) -- (v5) -- ($(v5)+(-30:0.6)$);

		\filldraw[black] (v1) circle (2pt);
		\filldraw[black] (v2) circle (2pt);
		\filldraw[black] (v3) circle (2pt) node [inner sep=6pt,anchor=north] {$v_1$};
		\filldraw[black] (v4) circle (2pt);
		\filldraw[black] (v5) circle (2pt) node [inner sep=7pt,anchor=north] {$v_2$};
		\end{tikzpicture}
	\end{center}
	\caption{Forbidden configuration in \cref{5v_inc_5_3f}.}	\label{fig:5v_inc_5_3f}
\end{figure}

\begin{claim}\label{obs_5v_inc_4_3f}
Let $v$ be a 5-vertex of $G$ adjacent to exactly one $4^+$-face, and let $v_1$ and $v_2$ be the neighbors of $v$ on the boundary of this $4^+$-face.
Then either both $v_1$ and $v_2$ are $6^+$-vertices, or at least one of them is a $7^+$-vertex.
\end{claim}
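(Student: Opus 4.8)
The plan is to follow the template of the previous claims: assume the statement fails, delete the offending $5$-vertex, repair the distance-$2$ adjacencies with a single new edge, and count the colours blocked for the deleted vertex. So suppose $v$ is a $5$-vertex incident to exactly one $4^+$-face $f_0$, and suppose for contradiction that the conclusion fails. Since $G$ is $2$-connected, the five faces at $v$ have distinct boundary cycles, and the four faces other than $f_0$ are $3$-faces forming a fan centered at $v$. Writing the neighbours of $v$ in cyclic order as $v_1, u_1, u_2, u_3, v_2$, with $v_1, v_2$ the neighbours of $v$ on $\partial f_0$, these $3$-faces are $[v,v_1,u_1]$, $[v,u_1,u_2]$, $[v,u_2,u_3]$, $[v,u_3,v_2]$, so $v_1u_1, u_1u_2, u_2u_3, u_3v_2 \in E(G)$. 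The failure of the conclusion means one of $v_1,v_2$ is a $5^-$-vertex while both are $6^-$-vertices; by symmetry I may assume $\deg(v_1)\le 5$ and $\deg(v_2)\le 6$.

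Next I would set $G' := G - v + v_1v_2$ and check properties (i)--(iv) of the outline. Property (iii) holds because $v_1$ and $v_2$ lie on the face of $G-v$ created by removing $v$, so $v_1v_2$ can be added planarly; for (ii), adding $v_1v_2$ exactly compensates the edge lost at $v_1$ (resp.\ at $v_2$) when $v$ is deleted, so no degree exceeds $\Delta$; (i) is immediate. For (iv) it suffices to inspect pairs of neighbours of $v$, as these are the only pairs whose distance-$2$ certificate in $G$ could pass through $v$: the pairs $v_1u_1,u_1u_2,u_2u_3,u_3v_2$ remain adjacent, the pairs $v_1u_2,u_1u_3,u_2v_2$ keep the common neighbours $u_1,u_2,u_3$ respectively, and the three remaining pairs $v_1u_3$, $u_1v_2$, $v_1v_2$ are handled by the new edge, via the length-$2$ paths $v_1v_2u_3$ and $v_2v_1u_1$ and the edge $v_1v_2$ itself. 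Hence by minimality of $G$, the graph $G'$ has a distance-$2$ $(3\Delta+4)$-colouring, which we transport to $V(G)\setminus\{v\}$.

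Finally I would bound the number of colours blocked for $v$ by charging each such colour to a neighbour of $v$: a vertex $x\in N(v)$ is made responsible for its own colour together with the colours of its neighbours that are neither $v$ nor a neighbour of $v$ lying with $x$ on a common $3$-face of the fan. Every vertex within distance $2$ of $v$ is then charged (a distance-$1$ vertex by itself, a distance-$2$ vertex as a ``private'' neighbour of some $x\in N(v)$), with possible over-counting only, which is harmless. Now $v_1$ and $v_2$ each have exactly one fan-neighbour in $N(v)$ (namely $u_1$, resp.\ $u_3$), so each is responsible for at most $\deg(v_i)-1$ colours, whereas $u_1,u_2,u_3$ each have two fan-neighbours in $N(v)$, hence each is responsible for at most $\deg(u_i)-2$ colours. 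The count is therefore at most $(\deg(v_1)-1)+(\deg(v_2)-1)+\sum_{i=1}^{3}(\deg(u_i)-2)\le 4 + 5 + 3(\Delta-2) = 3\Delta+3 < 3\Delta+4$, so a free colour remains for $v$, contradicting that $G$ is a counterexample; this proves the claim.

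The only real subtlety is getting this count right. Breaking the cyclic order of $N(v)$ at the $4^+$-face gives $v_1$ and $v_2$ one extra private neighbour each compared with the all-$3$-face situation of \cref{5v_inc_5_3f}, so merely requiring both to be $6^-$ would yield exactly $3\Delta+4$ blocked colours — not strictly fewer. It is precisely this slack that forces the stated dichotomy: one of $v_1,v_2$ must be a $7^+$-vertex, or the $5^-$/$6^-$ refinement must hold, saving the one colour needed. I also expect a line will be needed to justify carefully that the charging scheme exhausts the second neighbourhood of $v$.
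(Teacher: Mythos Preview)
Your proof is correct and follows essentially the same approach as the paper: form $G' = G - v + v_1v_2$, then count blocked colours as $(\deg(v_1)-1)+(\deg(v_2)-1)+3(\Delta-2) \le 3\Delta+3$. The paper phrases the negation of the conclusion simply as $\deg(v_1)+\deg(v_2)\le 11$ and omits the explicit verification of property~(iv), but the argument is the same; your write-up just spells out more of the details.
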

\begin{proof}
Let $v,v_1,v_2$ be as in the assumptions of the claim (see \cref{fig:obs_5v_inc_4_3f}) and suppose that the statement does not hold.
In particular, $\deg(v_1) + \deg(v_2) \leq 11$.
Let $G' = G - v + v_1v_2$.  
We observe that a distance-2 $(3\Delta + 4)$-coloring of $G'$ blocks at most $ (\deg(v_1) + \deg(v_2) - 2) + 3 (\Delta - 2) \leq 3\Delta + 3 < 3\Delta+4$ colors for $v$.
\end{proof}
\begin{figure}[h!]	
	\begin{center}
		\begin{tikzpicture}
		\coordinate (v) at (0,0);
		\foreach \x in {1,2,3,4,5}
		{
			\coordinate (v\x) at ($(v)+(72*\x:1)$);
			\draw (v) -- (v\x);
		}
		
		\draw (v1) -- (v2);
		\draw (v3) -- (v4) -- (v5) -- (v1);
		
		\leftstar{(v2)}{d(v_2)-2}
		\leftstar{(v3)}{d(v_1)-2}
		\node[gray,anchor=east] at ($(v2)+(-0.5,0)$) {$\deg(v_2)\left\{\begin{matrix}{}\\{}\end{matrix}\right.$};
		\node[gray,anchor=east] at ($(v3)+(-0.5,0)$) {$\deg(v_1)\left\{\begin{matrix}{}\\{}\end{matrix}\right.$};
		
		\upstar{(v1)}{\Delta-3}
		\rightstar{(v5)}{\Delta-3}
		\downstar{(v4)}{\Delta-3}
		
		\filldraw[black] (v) circle (2pt) node [inner sep=6pt,anchor=east] {$v$};
		\filldraw[black] (v1) circle (2pt);
		\filldraw[black] (v2) circle (2pt) node [inner sep=6pt,anchor=south] {$v_2$};
		\filldraw[black] (v3) circle (2pt) node [inner sep=6pt,anchor=north] {$v_1$};
		\filldraw[black] (v4) circle (2pt);
		\filldraw[black] (v5) circle (2pt);
		
		\node at (3,0) {$\to$};
		
		\coordinate (v) at (7.5,0);
		\foreach \x in {1,2,3,4,5}
		{
			\coordinate (v\x) at ($(v)+(72*\x:1)$);
			
		}
		
		\draw (v1) -- (v2) -- (v3) -- (v4) -- (v5) -- (v1);
		
		\leftstar{(v2)}{d(v_2)-2}
		\leftstar{(v3)}{d(v_1)-2}
		\node[gray,anchor=east] at ($(v2)+(-0.5,0)$) {$\deg(v_2)\left\{\begin{matrix}{}\\{}\end{matrix}\right.$};
		\node[gray,anchor=east] at ($(v3)+(-0.5,0)$) {$\deg(v_1)\left\{\begin{matrix}{}\\{}\end{matrix}\right.$};
		
		\upstar{(v1)}{\Delta-3}
		\rightstar{(v5)}{\Delta-3}
		\downstar{(v4)}{\Delta-3}

		\filldraw[black] (v1) circle (2pt);
		\filldraw[black] (v2) circle (2pt) node [inner sep=6pt,anchor=south] {$v_2$};
		\filldraw[black] (v3) circle (2pt) node [inner sep=6pt,anchor=north] {$v_1$};
		\filldraw[black] (v4) circle (2pt);
		\filldraw[black] (v5) circle (2pt);
		\end{tikzpicture}
	\end{center}
	\caption{Forbidden configuration in \cref{obs_5v_inc_4_3f}.}
	\label{fig:obs_5v_inc_4_3f}
\end{figure}
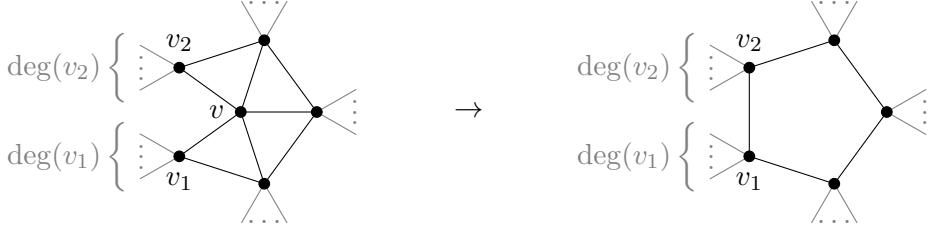

In the next claims we will need the following definitions.
For a vertex $v$, a vertex $u \in N(v)$ is a \emph{bad neighbor of $v$} if (i) $\deg(u)=5$ and (ii) $u$ is incident to four $3$-faces and one $4^+$-face $f$, and (iii) $v$ and $u$ are consecutive vertices of the boundary of $f$.
A vertex $u \in N(v)$ is \emph{very bad neighbor of $v$} if $\deg(u)=5$ and $u$ is incident to five $3$-faces.

\begin{claim}\label{5v_5f_adj}
The very bad neighbors of any vertex are pairwise nonadjacent.
\end{claim}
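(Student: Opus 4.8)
The plan is to use the reducibility scheme of this section. Suppose for contradiction that some vertex $v$ has two very bad neighbours $u_1,u_2$ with $u_1u_2\in E(G)$; I would delete $u_1$ and show that fewer than $3\Delta+4$ colours are blocked for it. First I would fix the local picture. Since $u_1$ is a $5$-vertex incident to five $3$-faces, all five faces at $u_1$ are triangles, so its neighbours, listed in the cyclic order of the embedding, form a $5$-cycle; write them as $a,u_2,b,c,d$, so that the faces at $u_1$ are $[u_1,a,u_2],[u_1,u_2,b],[u_1,b,c],[u_1,c,d],[u_1,d,a]$. The same holds at $u_2$: its neighbours cyclically are $a,u_1,b,e,g$ with faces $[u_2,a,u_1],[u_2,u_1,b],[u_2,b,e],[u_2,e,g],[u_2,g,a]$. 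In particular $a$ and $b$ are common neighbours of $u_1$ and $u_2$ (and $v$ is one of $a,b$ or one of the two remaining neighbours of each $u_i$). Applying \cref{5v_inc_5_3f} to $u_1$, whose neighbour $u_2$ has degree $5$, the other four neighbours $a,b,c,d$ are all $7^+$-vertices; symmetrically $a,b,e,g$ are $7^+$-vertices.

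Next I would perform the deletion. Put $G':=G-u_1$. Properties (i)--(iii) are immediate, and (iv) holds because the $5$-cycle $a\,u_2\,b\,c\,d$ keeps every pair of neighbours of $u_1$ within distance $2$ in $G'$, so no edge needs to be added. It then remains to bound the number of vertices at distance at most $2$ from $u_1$ in $G$. The five neighbours contribute $5$; a vertex at distance exactly $2$ lies in $N(w)\setminus N[u_1]$ for some $w\in N(u_1)$, and since $a\sim u_1,u_2,d$, $b\sim u_1,u_2,c$, $c\sim u_1,b,d$, $d\sim u_1,c,a$, and $N(u_2)\setminus N[u_1]\subseteq\{e,g\}$ with $e\in N(b)$ and $g\in N(a)$ (so $u_2$ adds no new vertex), the number of such vertices is at most $(\deg a-3)+(\deg b-3)+(\deg c-3)+(\deg d-3)$. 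Hence at most $\deg a+\deg b+\deg c+\deg d-7\le 4\Delta-7$ colours are blocked for $u_1$. For $\Delta\le 10$ this is already strictly less than $3\Delta+4$ (and for $\Delta=6$ a very bad vertex cannot exist at all, by \cref{5v_inc_5_3f}), which gives the required contradiction in that range.

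The main obstacle is the range $\Delta\ge 11$, where the crude bound $4\Delta-7$ does not beat $3\Delta+4$ and the count must be sharpened; the extra slack has to come from the fact that $u_2$ is not merely a degree-$5$ neighbour but itself a \emph{very bad} vertex, so the triangular fans at $u_1$ and at $u_2$ are interlocked along the edge $u_1u_2$. Concretely I would analyse the possible coincidences of $e,g$ with $c,d$ (a short face-count at $a$ or $b$ shows such a coincidence is either impossible or forces $\deg a$ or $\deg b$ to drop while removing a counted vertex), and the further vertices that two of the sets $N(a),N(b),N(c),N(d)$ must share inside the distance-$2$ set because of the triangles at the edges $bc,cd,da$ and their counterparts at $u_2$; if a deficit still remains, one recolours a single neighbour of $u_1$ (most naturally $u_2$, which in $G-u_1$ has degree $4$ and hence a free colour for a good range of $\Delta$) before colouring $u_1$, arguing that its old colour then becomes free for $u_1$. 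Turning these observations into a clean bound strictly below $3\Delta+4$ for every $\Delta$ is the delicate part of the argument.
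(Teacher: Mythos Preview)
Your approach is incomplete: the vertex-deletion bound $4\Delta-7$ only suffices for $\Delta\le 10$, and the sketch you give for $\Delta\ge 11$ (forcing coincidences among $e,g,c,d$, or recolouring $u_2$) does not close the gap. In fact the coincidence argument cannot work in general, and recolouring $u_2$ in $G-u_1$ still faces roughly $4\Delta$ blocked colours, so no free colour is guaranteed for large $\Delta$.

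The paper sidesteps all of this with a one-line trick: instead of deleting a vertex, delete the \emph{edge} $u_1u_2$ and take a distance-2 $(3\Delta+4)$-colouring $\varphi$ of $G'=G-u_1u_2$. The point is that $\varphi$ is already a valid distance-2 colouring of $G$, with no recolouring needed. Using your own notation, the edge $u_1u_2$ lies on the two $3$-faces $[u_1,a,u_2]$ and $[u_1,u_2,b]$, so $u_1$ and $u_2$ remain at distance $2$ in $G'$ via $a$ (or $b$). The only distance-$2$ pairs in $G$ that could conceivably be lost are those of the form $(u_1,x)$ with $x\in N_G(u_2)\setminus\{u_1\}=\{a,b,e,g\}$, and symmetrically for $u_2$. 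But $a,b\in N(u_1)$, while $e\in N(b)$ and $g\in N(a)$ because $[u_2,b,e]$ and $[u_2,g,a]$ are $3$-faces; hence $e,g$ are still at distance $2$ from $u_1$ in $G'$. The symmetric argument handles $u_2$. Thus the square of $G$ equals the square of $G'$, and the colouring transfers directly, giving the contradiction for every $\Delta$.
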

\begin{proof}
Assume that some vertex has two adjacent very bad neighbors $v_1,v_2$, see \cref{fig:5v_5f_adj}.
Let $G' = G - v_1v_2$.
By the minimality of $G$, we observe that $G'$ admits a distance-2 $(3\Delta + 4)$-coloring $\varphi$.
Finally, note that $\varphi$ is also a distance-2 coloring of $G$, a contradiction.
\end{proof}
\begin{figure}[h!]
		\begin{center}
		\begin{tikzpicture}
		\coordinate (v) at (0,0);
		\coordinate (u) at (1,0);
		
		\foreach \x in {1,2,3,4}
		{
			\coordinate (v\x) at ($(v)+(72*\x:1)$);
			\draw (v) -- (v\x);
		}
		\draw (v) -- (u) -- (v1) -- (v2) -- (v3) -- (v4) -- (u);
		\foreach \x in {1,2}
		{
			\coordinate (u\x) at ($(u)+(180-72*\x-72:1)$);
			\draw (u) -- (u\x);
		}
		\draw (v1) -- (u1) -- (u2) -- (v4);
		
		\leftstar{(v2)}{\Delta-3}
		\leftstar{(v3)}{\Delta-3}
		\upstar{(v1)}{\Delta-4}
		\downstar{(v4)}{\Delta-4}
		\rightstar{(u1)}{\Delta-3}
		\rightstar{(u2)}{\Delta-3}
		
		\filldraw[black] (v) circle (2pt) node[inner sep=5pt,anchor=east] {$v_1$};
		\filldraw[black] (u) circle (2pt) node[inner sep=6pt,anchor=west] {$v_2$};
		
		\foreach \x in {1,2,3,4}
		\filldraw[black] (v\x) circle (2pt);
		\foreach \x in {1,2}
		\filldraw[black] (u\x) circle (2pt);
		
		\node at (4,0) {$\to$};
		
		\coordinate (v) at (7,0);
		\coordinate (u) at (8,0);
		
		\foreach \x in {1,2,3,4}
		{
			\coordinate (v\x) at ($(v)+(72*\x:1)$);
			\draw (v) -- (v\x);
		}
		\draw (u) -- (v1) -- (v2) -- (v3) -- (v4) -- (u);
		\foreach \x in {1,2}
		{
			\coordinate (u\x) at ($(u)+(180-72*\x-72:1)$);
			\draw (u) -- (u\x);
		}
		\draw (v1) -- (u1) -- (u2) -- (v4);
		
		\leftstar{(v2)}{\Delta-3}
		\leftstar{(v3)}{\Delta-3}
		\upstar{(v1)}{\Delta-4}
		\downstar{(v4)}{\Delta-4}
		\rightstar{(u1)}{\Delta-3}
		\rightstar{(u2)}{\Delta-3}
		
		\filldraw[black] (v) circle (2pt) node[inner sep=5pt,anchor=east] {$v_1$};
		\filldraw[black] (u) circle (2pt) node[inner sep=6pt,anchor=west] {$v_2$};
		
		\foreach \x in {1,2,3,4}
		\filldraw[black] (v\x) circle (2pt);
		\foreach \x in {1,2}
		\filldraw[black] (u\x) circle (2pt);
		\end{tikzpicture}
	\end{center}
	\caption{Forbidden configuration in \cref{5v_5f_adj}.}
	\label{fig:5v_5f_adj}
\end{figure}
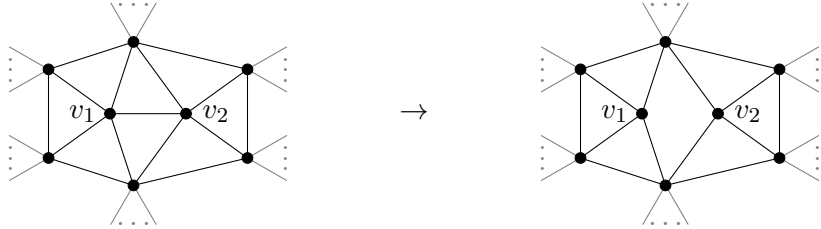

The proofs of Claims~\ref{5v_5f_4f_adj} and~\ref{5v_4f_adj} are again similar to each other.
In both of them the forbidden configuration involves two adjacent vertices $v_1$ and $v_2$.
We assume that such vertices exist in $G$ and obtain a new graph $G'$ by removing the edge $v_1v_2$.
Note that all pairs of vertices from $V(G) \setminus \{v_1,v_2\}$ that are at distance at most 2 in $G$ remain so in $G'$.
By the minimality of $G$, we observe that $G'$ admits a distance-2 $(3\Delta + 4)$-coloring.
However, after restoring the edge $v_1v_2$ the colors assigned to $v_1$ and $v_2$ might be in conflict with each other and with the colors of some other vertices. Thus we erase the colors of $v_1$ and $v_2$ are recolor these vertices in a greedy way.
Again, we ensure that this is possible by counting the number of blocked colors.

\begin{claim}\label{5v_5f_4f_adj}
If $v$ is an $8^-$-vertex, $v_1$ is its very bad neighbor and $v_2$ is its bad neighbor, then $v_1$ and $v_2$ are nonadjacent.
\end{claim}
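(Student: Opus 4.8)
The plan is to follow the scheme outlined just above the claim. Suppose for contradiction that $v_1$ and $v_2$ are adjacent, where $v$ is an $8^-$-vertex, $v_1$ is a very bad neighbor of $v$ (so $\deg(v_1)=5$ and all five faces incident to $v_1$ are $3$-faces), and $v_2$ is a bad neighbor of $v$ (so $\deg(v_2)=5$, $v_2$ is incident to four $3$-faces and one $4^+$-face $f$, and $v,v_2$ are consecutive on the boundary of $f$). Note $v_1\ne v_2$, since one is incident to five $3$-faces and the other to only four. Set $G' := G - v_1v_2$. Then $G'$ is planar, $\Delta(G')\le\Delta$, and $|V(G')|+|E(G')|<|V(G)|+|E(G)|$, so by minimality $G'$ admits a distance-$2$ $(3\Delta+4)$-coloring $\varphi$. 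Any path of length at most $2$ between two vertices of $V(G)\setminus\{v_1,v_2\}$ cannot use the edge $v_1v_2$, so all such pairs are still at distance at most $2$ in $G'$; hence $\varphi$ colors all of $V(G)\setminus\{v_1,v_2\}$ correctly for $G$. It remains to recolor $v_1$ and $v_2$. We erase $\varphi(v_1)$ and $\varphi(v_2)$ and color first $v_1$, then $v_2$, greedily, bounding in each case the number of blocked colors by the number of relevant vertices.

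First consider $v_1$. Since all faces incident to $v_1$ are $3$-faces, the neighbors of $v_1$ form a cycle on five vertices, so each $a\in N(v_1)$ is adjacent to at least two other vertices of $N(v_1)$ and therefore has at most $\deg(a)-3$ neighbors outside $N[v_1]$. As $v,v_2\in N(v_1)$ with $\deg(v)\le 8$ and $\deg(v_2)=5$, the number of vertices at distance at most $2$ from $v_1$, other than $v_1$, is at most $5+\sum_{a\in N(v_1)}(\deg(a)-3)\le 5+(8+5+3\Delta)-15 = 3\Delta+3$. Since $v_2$ is currently uncolored, at most $3\Delta+2<3\Delta+4$ colors are blocked for $v_1$, so $v_1$ receives a color.

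Now consider $v_2$, with all of $V(G)\setminus\{v_2\}$ colored. Because $v_2$ has degree $5$ and is incident to four $3$-faces, its neighbors in cyclic order $w_0,w_1,w_2,w_3,w_4$ satisfy $w_k\sim w_{k+1}$ for $k=0,1,2,3$, and $w_4,w_0$ are the two neighbors lying on the boundary of the $4^+$-face $f$; by the bad-neighbor condition we may take $v=w_0$, and since the edge $v_1v_2$ lies on two $3$-faces of $v_2$ we have $v_1\in\{w_1,w_2,w_3\}$. The naive estimate — each \emph{interior} neighbor $w_1,w_2,w_3$ contributes at most $\deg(\cdot)-3$ vertices outside $N[v_2]$ and each \emph{boundary} neighbor $w_0=v,w_4$ contributes at most $\deg(\cdot)-2$ — gives only $5+(\sum \deg(w_k))-13\le 3\Delta+5$, which is not good enough. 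The main obstacle is to recover the missing savings of $2$, and this is where the interaction between $v_1$ and $v_2$ is used: $v_1$ is a common neighbor of $v$ and $v_2$, so $v$ (or some $w_k$) already has one more neighbor inside $N(v_2)$ than the generic bound charges; and the at most two neighbors of $v_1$ lying outside $\{v,v_2\}\cup N(v_2)$ are, thanks to the triangular faces around $v_1$, each reached from $v_2$ along two different length-$2$ paths (through $v_1$ and through $v$ or a $w_k$), hence double-counted in the naive estimate. A short case analysis over $v_1\in\{w_1,w_2,w_3\}$, combined with the facts $\deg(v)\le 8$, $\deg(v_1)=5$, $v_1\sim v$, and that $N(v_1)$ is a $5$-cycle, shows that in every case at most $3\Delta+3<3\Delta+4$ vertices lie within distance $2$ of $v_2$; thus $v_2$ receives a color as well. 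The resulting coloring is a distance-$2$ $(3\Delta+4)$-coloring of $G$, contradicting the choice of $G$.
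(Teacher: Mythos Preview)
Your proof is correct and follows exactly the paper's approach: delete the edge $v_1v_2$, color $G'$ by minimality, erase the colors of $v_1$ and $v_2$, and recolor them greedily in that order. Your count for $v_1$ matches the paper's, and for $v_2$ you make explicit the double-counting via the $5$-cycle $N(v_1)$ (using $v\sim v_1$ and the triangular faces around $v_1$) that the paper's terse bound of $3\Delta+2$ leaves implicit; the short case analysis you defer to over $v_1\in\{w_1,w_2,w_3\}$ indeed goes through as you describe.
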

\begin{proof}
For contradiction, suppose that $G$ has three vertices $v,v_1,v_2$ as in the assumption and $v_1$ is adjacent to $v_2$, see \cref{fig:5v_5f_4f_adj}.
Let $G' = G - v_1v_2$ and consider a distance-2 $(3\Delta + 4)$-coloring of $G'$ with colors of $v_1$ and $v_2$ erased.
We observe that the number of colors that are blocked for $v_1$ is at most $2(\Delta - 2) + (\Delta - 3) + 2 + 6 = 3\Delta + 1 < 3\Delta+4$. %
Then the number of colors that are blocked for $v_2$ is at most $(\Delta - 1) + (\Delta - 2) + (\Delta - 3)  + 2 + 6 = 3\Delta + 2 < 3\Delta+4$.
\end{proof}
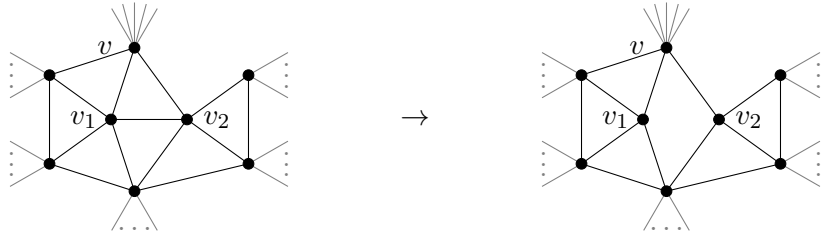
\begin{figure}[h!]
	\begin{center}
		\begin{tikzpicture}
		\coordinate (v) at (0,0);
		\coordinate (u) at (1,0);
		
		\foreach \x in {1,2,3,4}
		{
			\coordinate (v\x) at ($(v)+(72*\x:1)$);
			\draw (v) -- (v\x);
		}
		\draw (v) -- (u) -- (v1) -- (v2) -- (v3) -- (v4) -- (u);
		\foreach \x in {1,2}
		{
			\coordinate (u\x) at ($(u)+(180-72*\x-72:1)$);
			\draw (u) -- (u\x);
		}
		\draw (u1) -- (u2) -- (v4);
		
		\foreach \x in {0,1,2,3,4}
		\draw[gray] (v1) -- ($(v1)+(60+15*\x:0.6)$);
		
		\leftstar{(v2)}{\Delta-3}
		\leftstar{(v3)}{\Delta-3}
		\downstar{(v4)}{\Delta-4}
		\rightstar{(u1)}{\Delta-3}
		\rightstar{(u2)}{\Delta-3}
		
		\filldraw[black] (v) circle (2pt) node[inner sep=5pt,anchor=east] {$v_1$};
		\filldraw[black] (u) circle (2pt) node[inner sep=6pt,anchor=west] {$v_2$};
		
		\filldraw[black] (v1) circle (2pt) node[inner sep=8pt,anchor=east] {$v$};
		
		\foreach \x in {2,3,4}
		\filldraw[black] (v\x) circle (2pt);
		\foreach \x in {1,2}
		\filldraw[black] (u\x) circle (2pt);

		\node at (4,0) {$\to$};
		
		\coordinate (v) at (7,0);
		\coordinate (u) at (8,0);
		
		\foreach \x in {1,2,3,4}
		{
			\coordinate (v\x) at ($(v)+(72*\x:1)$);
			\draw (v) -- (v\x);
		}
		\draw (u) -- (v1) -- (v2) -- (v3) -- (v4) -- (u);
		\foreach \x in {1,2}
		{
			\coordinate (u\x) at ($(u)+(180-72*\x-72:1)$);
			\draw (u) -- (u\x);
		}
		\draw (u1) -- (u2) -- (v4);
		
		\foreach \x in {0,1,2,3,4}
		\draw[gray] (v1) -- ($(v1)+(60+15*\x:0.6)$);
		
		\leftstar{(v2)}{\Delta-3}
		\leftstar{(v3)}{\Delta-3}
		\downstar{(v4)}{\Delta-4}
		\rightstar{(u1)}{\Delta-3}
		\rightstar{(u2)}{\Delta-3}
		
		\filldraw[black] (v) circle (2pt) node[inner sep=5pt,anchor=east] {$v_1$};
		\filldraw[black] (u) circle (2pt) node[inner sep=6pt,anchor=west] {$v_2$};
		
		\filldraw[black] (v1) circle (2pt) node[inner sep=8pt,anchor=east] {$v$};
		
		\foreach \x in {2,3,4}
		\filldraw[black] (v\x) circle (2pt);
		\foreach \x in {1,2}
		\filldraw[black] (u\x) circle (2pt);
		\end{tikzpicture}
	\end{center}
	\caption{Forbidden configuration in \cref{5v_5f_4f_adj}.}	\label{fig:5v_5f_4f_adj}
\end{figure}

\begin{claim}\label{5v_4f_adj}
If $v$ is a $7$-vertex and $v_1,v_2$ are its bad neighbors, then $v_1$ and $v_2$ are nonadjacent.
\end{claim}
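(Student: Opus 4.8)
\emph{Proof proposal.} The plan is to follow the scheme used for \cref{5v_5f_4f_adj}: assume for contradiction that $v_1v_2\in E(G)$, delete this edge, apply the minimality of $G$, and then recolor $v_1$ and $v_2$ greedily, one after the other. First I would record the local structure. Since $v_1$ and $v_2$ are bad neighbors of $v$, each $v_i$ ($i\in\{1,2\}$) is a $5$-vertex incident to exactly four $3$-faces and one $4^+$-face $f^{(i)}$ whose boundary contains the edge $vv_i$; moreover $vv_1,vv_2\in E(G)$, so together with the assumption $v_1v_2\in E(G)$ the vertices $v,v_1,v_2$ are pairwise adjacent. The four triangular faces at $v_i$ force the five neighbors of $v_i$ to occur along a path in the rotation at $v_i$ — write this path as $v=p_0,p_1,p_2,p_3,p_4$ for $i=1$ and $v=q_0,q_1,q_2,q_3,q_4$ for $i=2$ — in which consecutive vertices are adjacent and the unique ``missing'' consecutive pair (the one separated by $f^{(i)}$) consists of the two endpoints of the path; since $vv_i$ lies on $\partial f^{(i)}$, $v$ is one of those endpoints. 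A short planarity argument about the two faces carrying the edge $v_1v_2$ then shows that $v_2$ is an \emph{interior} vertex $p_k$ of the first path with $k\in\{1,2,3\}$ (the endpoint case $k=4$ would force $f^{(1)}=f^{(2)}$, which is impossible for a $4^+$-face on which both $v_1$ and $v_2$ have $v$ as a boundary-neighbor), and symmetrically for $v_1$ inside the second path.

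Next I would set $G':=G-v_1v_2$. Removing a single edge cannot separate two vertices of $V(G)\setminus\{v_1,v_2\}$ lying on a common path of length at most $2$, so every such pair keeps distance at most $2$ in $G'$; by minimality $G'$ has a distance-$2$ $(3\Delta+4)$-coloring $\varphi$, which is already valid in $G$ for every vertex other than $v_1,v_2$. Erase $\varphi(v_1)$ and $\varphi(v_2)$. To recolor $v_1$ I would bound the number of colors blocked for it, i.e.\ the number of colored vertices within distance $2$ of $v_1$ in $G$. Using the path structure, each neighbor of $v_1$ other than $v$ shares most of its neighborhood with $\{v_1\}\cup N(v_1)$ and hence contributes few new colors, while $v$ contributes at most $\deg(v)-2=5$ new vertices because $v_1,v_2\in N(v)$. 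The crucial extra saving comes from the edge $v_1v_2$: its two incident faces force a common neighbor $m\neq v$ of $v_1$ and $v_2$ (the two faces are either a pair of triangles $[v_1,v_2,m],[v_1,v_2,m']$, or one triangle $[v_1,v_2,m]$ together with the excluded coincidence $f^{(1)}=f^{(2)}$), and this $m$ turns out to be adjacent to at least four vertices already counted. Adding the contributions, at most $3\Delta+2<3\Delta+4$ colors are blocked for $v_1$, so it can be recolored. I would then rerun the count for $v_2$, now with $v_1$ colored again — this adds exactly one more blocked color — and obtain at most $3\Delta+3<3\Delta+4$ blocked colors for $v_2$. Hence both vertices can be recolored, contradicting the choice of $G$.

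The main obstacle is that the final estimate for $v_2$ is extremely tight: a count that ignores the forced common neighbor $m$ only yields the useless bound $3\Delta+5$, so the planarity argument locating $m$ (and showing that four of its neighbors are already accounted for) is really the heart of the proof. Concretely, one must go through the three possibilities for the position $k$ of $v_2$ in $v_1$'s neighbor path; the tightest is $k=1$, in which one of the two faces on $v_1v_2$ is the triangle $vv_1v_2$ itself and $m=p_2=q_2$ is the only common neighbor besides $v$, and there the bound for $v_2$ is exactly $3\Delta+3$. The remaining cases force an additional common neighbor of $v_1$ and $v_2$ and are slacker. It is also worth noting that the hypothesis $\deg(v)=7$ (rather than merely $\deg(v)\le 8$) is used at full strength: with $\deg(v)=8$ the same count would give exactly $3\Delta+4$ blocked colors for $v_2$, leaving no free color.
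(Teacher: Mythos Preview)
Your proposal is correct and follows exactly the paper's strategy: delete $v_1v_2$, color the smaller graph by minimality, erase the colors of $v_1$ and $v_2$, and recolor them greedily by bounding the number of blocked colors. The paper's proof is far terser—it simply asserts the bound $(\Delta-1)+(\Delta-2)+(\Delta-3)+2+6=3\Delta+2$ for each of $v_1,v_2$ without spelling out the local structure—so your careful identification of the forced common neighbor $m$ and the case split on $k\in\{1,2,3\}$ makes explicit what the paper leaves to the reader (and to its figure), but the underlying argument is the same.
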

\begin{proof}
For contradiction, suppose that $G$ has vertices $v,v_1,v_2$ as in the assumption of the claim and $v_1$ is adjacent to $v_2$, see \cref{fig:5v_4f_adj}.
Let $G' = G - v_1v_2$ and consider a distance-2 $(3\Delta + 4)$-coloring of $G'$ with colors of $v_1$ and $v_2$ erased.
Note that the number of colors blocked for each of $v_1,v_2$ is at most $(\Delta - 1) + (\Delta - 2) + (\Delta - 3)  + 2 + 6 = 3\Delta + 2 < 3\Delta+4$.
\end{proof}
\begin{figure}[h!]	
	\begin{center}
		\begin{tikzpicture}
		\coordinate (v) at (0,0);
		\coordinate (u) at (1,0);
		
		\foreach \x in {1,2,3,4}
		{
			\coordinate (v\x) at ($(v)+(72*\x:1)$);
			\draw (v) -- (v\x);
		}
		\draw (v) -- (u) -- (v1);
		\draw (v2) -- (v3) -- (v4) -- (u);
		\foreach \x in {1,2}
		{
			\coordinate (u\x) at ($(u)+(180-72*\x-72:1)$);
			\draw (u) -- (u\x);
		}
		\draw (u1) -- (u2) -- (v4);
		
		\foreach \x in {0,1,2,3,4}
		\draw[gray] (v1) -- ($(v1)+(60+15*\x:0.6)$);
		
		\leftstar{(v2)}{\Delta-3}
		\leftstar{(v3)}{\Delta-3}
		\downstar{(v4)}{\Delta-4}
		\rightstar{(u1)}{\Delta-3}
		\rightstar{(u2)}{\Delta-3}
		
		\filldraw[black] (v) circle (2pt) node[inner sep=5pt,anchor=east] {$v_1$};
		\filldraw[black] (u) circle (2pt) node[inner sep=6pt,anchor=west] {$v_2$};
		
		\filldraw[black] (v1) circle (2pt) node[inner sep=8pt,anchor=east] {$v$};
		
		\foreach \x in {2,3,4}
		\filldraw[black] (v\x) circle (2pt);
		\foreach \x in {1,2}
		\filldraw[black] (u\x) circle (2pt);

		\node at (4,0) {$\to$};
		
		\coordinate (v) at (7,0);
		\coordinate (u) at (8,0);
		
		\foreach \x in {1,2,3,4}
		{
			\coordinate (v\x) at ($(v)+(72*\x:1)$);
			\draw (v) -- (v\x);
		}
		\draw (u) -- (v1);
		\draw (v2) -- (v3) -- (v4) -- (u);
		\foreach \x in {1,2}
		{
			\coordinate (u\x) at ($(u)+(180-72*\x-72:1)$);
			\draw (u) -- (u\x);
		}
		\draw (u1) -- (u2) -- (v4);
		
		\foreach \x in {0,1,2,3,4}
		\draw[gray] (v1) -- ($(v1)+(60+15*\x:0.6)$);
		
		\leftstar{(v2)}{\Delta-3}
		\leftstar{(v3)}{\Delta-3}
		\downstar{(v4)}{\Delta-4}
		\rightstar{(u1)}{\Delta-3}
		\rightstar{(u2)}{\Delta-3}
		
		\filldraw[black] (v) circle (2pt) node[inner sep=5pt,anchor=east] {$v_1$};
		\filldraw[black] (u) circle (2pt) node[inner sep=6pt,anchor=west] {$v_2$};
		
		\filldraw[black] (v1) circle (2pt) node[inner sep=8pt,anchor=east] {$v$};
		
		\foreach \x in {2,3,4}
		\filldraw[black] (v\x) circle (2pt);
		\foreach \x in {1,2}
		\filldraw[black] (u\x) circle (2pt);
		\end{tikzpicture}
	\end{center}
	\caption{Forbidden configiration in \cref{5v_4f_adj}.}
	\label{fig:5v_4f_adj}
\end{figure}

\subsection{Discharging}\label{sec:discharging}
We give an initial charge of $\wei(x) = \deg(x) - 4$ to every $x \in V(G) \cup F(G)$.
Using the Euler's formula $|V(G)| - |E(G)| + |F(G)| = 2$ and the handshaking lemma $2 |E(G)| = \sum_{v \in V(G)} \deg(v) =  \sum_{f \in F(G)} \deg(f)$, we derive the equality 
\begin{equation} \label{total_charge}
\sum_{x \in  V(G) \cup F(G)} \wei(x) = \sum_{x \in  V(G) \cup F(G)} (\deg(x) - 4) = -8. 
\end{equation}

The charges will be transferred between the elements of $V(G) \cup F(G)$ according to six discharging rules.
As we will see at the end, after the application of these rules, each vertex and face will have a non-negative charge.
Thus the total charge must be non-negative, which contradicts \eqref{total_charge}.
This shows that a counterexample to \cref{main_theorem} cannot exist.

\subsubsection*{Discharging rules}

We apply the following discharging rules.

\begin{enumerate}[label=\textbf{R\arabic*}]
\item \label{R1} Every $5^{+}$-vertex sends $\frac{1}{3}$ to each incident 3-face.
\item \label{R2} Every $6^{+}$-vertex sends additional $\frac{1}{6}$ to each incident weird 3-face.
\item \label{R3} Every $5^{+}$-face sends $\frac{1}{2}$ to each incident 3-vertex.
\item \label{R4} Every 6-vertex sends $\frac{1}{6}$ to each bad neighbor.
\item \label{R5} Every $7^{+}$-vertex sends $\frac{1}{3}$ to each bad neighbor.
\item \label{R6} Every $7^{+}$-vertex sends $\frac{1}{6}$ to each very bad neighbor.
\end{enumerate}

\subsubsection*{Final charges}

For $x \in V(G) \cup V(F)$, let $\wei'(x)$ be the final charge (after applying discharging rules).
We aim to show that $\wei'(x)\geq 0$ for all $x\in V(G) \cup F(G)$.

Let us start by bounding $\wei'(v)$ for $v \in V(G)$.
As by \cref{delta3} we have $\deg(v) \geq 3$. 
The analysis is split into cases depending on the degree of $v$.

\paragraph*{\textbf{Case $\deg(v)=3$.}}
By \cref{3v_inc_3f} and \cref{3v_inc_2_4f}, $v$ is incident to at least two $5^+$-faces.
Note that $v$ does not lose any charge and by \ref{R3} it receives charge at least $2\cdot \frac12=1$.
Consequently $\wei'(v)\geq \wei(v) +1 = (3-4)+1=0$.

\paragraph*{\textbf{Case $\deg(v)=4$.}}
As $v$ does not lose nor receive any charge, we have $\wei'(v) = \wei(v) = (4-4)=0$.

\paragraph*{\textbf{Case $\deg(v)=5$.}} 
Recall that $v$ loses charge due to \ref{R1} and might receive charge by \ref{R4}, \ref{R5}, and \ref{R6}.
Consider the following subcases.
\begin{itemize}
\item If $v$ is incident to at most three 3-faces, then the charge lost by $v$ due to \ref{R1} is at most $3\cdot\frac{1}{3}=1$.
Thus, $\wei'(v)\geq \wei(v) -1 = (5-4)-1=0$.
\item If $v$ is incident to four 3-faces, then due to \ref{R1} the charge lost is $4\cdot\frac{1}{3}=\frac{4}{3}$.
By \cref{obs_5v_inc_4_3f}, rules \ref{R4} and \ref{R5} make $v$ receive charge at least $\min\{2\cdot\frac{1}{6},\frac{1}{3}\}=\frac{1}{3}$.
Thus, $\wei'(v)\geq \wei(v) -\frac{4}{3}+\frac{1}{3} = (5-4)-1=0$.
\item If $v$ is incident to five 3-faces, then $v$ loses charge $5\cdot\frac{1}{3}=\frac{5}{3}$ by \ref{R1}.
By \cref{5v_inc_5_3f}, the application of \ref{R6} makes $v$ receive charge at least $4\cdot\frac{1}{6}=\frac{2}{3}$.
Thus, $\wei'(v)\geq \wei(v) -\frac{5}{3}+\frac{2}{3} =  (5-4)-1=0$.
\end{itemize}

\paragraph*{\textbf{Case $\deg(v)=6$.}} 
Recall that $v$ never receives any charge and loses charge due to \ref{R1}, \ref{R2}, and \ref{R4}.
Consider the following subcases.
\begin{itemize}
\item If $v$ is incident to six 3-faces, then $v$ has no bad neighbors.
Furthermore, by \cref{noweirdface}, $v$ is not incident to a weird 3-face.
Therefore $v$ loses charge only do due to \ref{R1} and the total charge lost is $6\cdot\frac{1}{3}=2$, so $\wei'(v)= \wei(v)-2 = (6-4)-2=0$.
\item Now consider the case that $v$ is incident to at most five 3-faces. Let $\mathbf{F}$ be the set of fans centered at $v$.
For each $\mathbf{f} \in \mathbf{F}$, there are $|\mathbf{f}|+1$ neighbors of $v$ that are incident to a face from $\mathbf{f}$.
Furthermore, each neighbor of $v$ is incident to a face of at most one fan.
Thus, $\sum_{\mathbf{f} \in \mathbf{F}}(|\mathbf{f}|+1)\leq \deg(v)=6$.

If a fan $\mathbf{f} \in \mathbf{F}$ contains a weird 3-face $f$, then, by \cref{weirdouter}, the face $f$ can only be an outer face of $\mathbf{f}$. Furthermore, the 4-vertex from the boundary of $f$ must be an outer vertex of $\mathbf{f}$.
Moreover, every bad neighbor of $v$ is an outer vertex of some fan in $\mathbf{F}$.
Thus the total number of bad neighbors and weird 3-faces incident to $v$ is bounded by $2 |\mathbf{F}|$.
Summing up, by \ref{R1}, \ref{R2}, and \ref{R4}, the total charge lost by $v$ is at most
\[
\frac{1}{3} \sum_{\mathbf{f} \in \mathbf{F}} |\mathbf{f}|+ \frac{1}{6} \cdot 2 \cdot |\mathbf{F}| = 
\frac{1}{3} \sum_{\mathbf{f} \in \mathbf{F}} |\mathbf{f}|+\frac{1}{3} \sum_{\mathbf{f} \in \mathbf{F}} 1 =\frac{1}{3} \sum_{\mathbf{f} \in \mathbf{F}}\left(|\mathbf{f}|+1\right)\leq\frac {\deg(v)}{3}=2.
\]
Consequently, $\wei'(v) \geq \wei(v) -2 =(6-4)-2=0$.
\end{itemize}

\paragraph*{\textbf{Case $\deg(v)=7$.}} 
Recall that $v$ never receives any charge and loses charge due to \ref{R1}, \ref{R2}, \ref{R5}, and \ref{R6}.
Consider the following subcases.
\begin{itemize}
\item  If $v$ is incident to seven 3-faces, then $v$ has no bad neighbor and, by \cref{5v_5f_adj}, $v$ has at most three very bad neighbors.
Furthermore, by \cref{noweirdface}, $v$ is not incident to a weird 3-face.
Thus, by \ref{R1} and \ref{R6}, the total charge lost by $v$ is at most $7\cdot\frac{1}{3} +3\cdot\frac{1}{6}=\frac{17}{6}$ and thus $\wei'(v) \geq \wei(v) - \frac{17}{6} = (7-4)-\frac{17}{6}=\frac{1}{6} \geq 0$.

\item Now consider the case that $v$ is incident to at most six 3-faces.
Let $\mathbf{F}$ be the set of fans centered at $v$. Recall that any two elements of $\mathbf{F}$ are separated by at least $4^+$-face incident to $v$. We proceed similarly as for the case of 6-vertices.

Consider a fan $\mathbf{f} \in \mathbf{F}$.
By \cref{weirdouter}, if $\mathbf{f}$ contains a weird 3-face $f$, then $f$  must be an outer face of $\mathbf{f}$ and the 4-vertex on the boundary of $f$ must be an outer vertex of $\mathbf{f}$.
By \cref{5v_5f_adj}, \cref{5v_5f_4f_adj} and \cref{5v_4f_adj}, bad and very bad neighbors of $v$ are pairwise nonadjacent.
Moreover, every bad neighbor which is a vertex of $\mathbf{f}$ must be an outer vertex of $\mathbf{f}$.

Let $w(\mathbf{f})$ be total charge sent by $v$ to the vertices and faces of $\mathbf{f}$.
The contribution of \ref{R1} to $w(\mathbf{f})$ is exactly $\frac{1}{3} \cdot |\mathbf{f}|$.

If $|\mathbf{f}|=1$, then the possible cases are as follows:
(a) the unique face in $\mathbf{f}$ is weird and none of the vertices of $\mathbf{f}$ is a bad or a very bad neighbor of $v$,
(b) the unique face in $\mathbf{f}$ is not weird, one of the vertices of $\mathbf{f}$ is a bad neighbor of $v$, and the other one is not bad nor very bad,
(c) the unique face in $\mathbf{f}$ is not weird, one of the vertices of $\mathbf{f}$ is a very bad neighbor of $v$, and the other one is not bad nor very bad,
(d) the unique face in $\mathbf{f}$ is not weird and none of the vertices of $\mathbf{f}$ is a bad or a very bad neighbor of $v$.
Summing up, the the total contribution of \ref{R2}, \ref{R5}, and \ref{R6} to $w(\mathbf{f})$ is at most $\frac{1}{3}$ (this happens in case (b) above).

Now consider the case that $|\mathbf{f}| > 1$.
The total charge sent to the outer faces of $\mathbf{f}$ and their vertices  is at most $\frac{2}{3}$; it happens if each of outer vertices is a bad neighbor.
Now let us consider the vertices $u$ of $\mathbf{f}$ that are not incident to the outer faces, note that there are exactly  $(|\mathbf{f}|+1)-4=|\mathbf{f}|-3$ of them. Due to \ref{R6}, the vertex $v$ sends charge $\frac{1}{6}$ to $u$ if $u$ is a very bad neighbor of $v$.
Since very bad neighbors are pairwise nonadjacent, we conclude that the number of very bad neighbors that are not the vertices of outer faces of $\mathbf{f}$ is at most $\lceil  \frac{|\mathbf{f}|-3}{2} \rceil$.

Summing up, we obtain that $w(\mathbf{f}) \leq \alpha(|\mathbf{f}|)$, where
\[
\alpha(t) = \begin{cases}
\frac{4}{6} & \text{ if } t=1,\\
\frac{8}{6} & \text{ if } t=2,\\
\frac{10}{6} & \text{ if } t=3,\\
\frac{13}{6} & \text{ if } t=4,\\
\frac{15}{6} & \text{ if } t=5,\\
\frac{18}{6} & \text{ if } t=6.
\end{cases}
\]

A straightforward case analysis shows that $\sum_{\mathbf{f} \in \mathbf{F}} \alpha(|\mathbf{f}|) \leq 3$.
Thus we obtain that $\wei'(v) = \wei(v) - \sum_{\mathbf{f} \in \mathbf{F}} w(\mathbf{f}) \geq (7-4) - \sum_{\mathbf{f} \in \mathbf{F}} \alpha(|\mathbf{f}|) \geq 3-3=0$.
\end{itemize}

\paragraph*{\textbf{Case $\deg(v) \geq 8$.}} 
Similarly to the previous case, $v$ never receives any charge and loses charge due to \ref{R1}, \ref{R2}, \ref{R5}, and \ref{R6}.
Consider the following subcases.
\begin{itemize}
\item If all faces incident to $v$ are 3-faces, then $v$ has no bad neighbor and thus \ref{R5} does not apply.
Recall that if $v$ is incident to a weird 3-face, then no vertex indicent to that face is a very bad neighbor of $v$.
So the total charge lost by $v$ due to \ref{R1}, \ref{R2}, and \ref{R6} is at most $\frac{1}{3} \cdot \deg(v) + \frac{1}{6} \cdot \deg(v) = \frac{1}{2} \cdot \deg(v)$. This gives us the final charge $\wei'(v) \geq \wei(v) - \frac{1}{2} \deg(v) = (\deg(v) -4) - \frac{1}{2} \deg(v) \geq 0$ as $\deg(v) \geq 8$.

\item Assume that $v$ is incident to some $4^+$ face.
Let $\mathbf{F}$ be the set of fans centered at $v$ and consider some $\mathbf{f} \in \mathbf{F}$.
Again, let $w(\mathbf{f})$ be the total charge sent by $v$ to vertices and faces of $\mathbf{f}$.
The contribution of \ref{R1} to  $w(\mathbf{f})$ is $\frac{1}{3}  |\mathbf{f}|$.
Denoting by $p$ the number of weird 3-faces in $\mathbf{f}$, we obtain the the contribution of \ref{R2} to  $w(\mathbf{f})$ is $\frac{1}{6} p$.
Recall that bad and very bad neighbors of $v$ are 5-vertices.
By \cref{4v_inc_3f_466}, the number of 5-vertices in $\mathbf{f}$ is at most $(|\mathbf{f}|+1)-(p+1)=|\mathbf{f}|-p$.
Recall that if $\mathbf{f}$ contains a bad neighbor $u$ of $v$, then $u$ must be an outer vertex of $\mathbf{f}$, so at most 2 of these 5-vertices are bad neighbors of $v$.
Thus the total contribution of \ref{R5} and \ref{R6} to $w(\mathbf{f})$ is at most $\frac{1}{3} \cdot 2 + \frac{1}{6} \cdot (|\mathbf{f}|-p-2)$.
Summing up, the obtain $w(\mathbf{f}) \leq \frac{1}{3}  |\mathbf{f}| + \frac{1}{6} p + \frac{1}{3} \cdot 2 + \frac{1}{6} \cdot (|\mathbf{f}|-p-2) = \frac{1}{2} |\mathbf{f}| + \frac{1}{3} \leq \frac{1}{2} (|\mathbf{f}|+1)$.
So the final charge is $\wei'(v) \geq \wei(v) - \sum_{\mathbf{f} \in \mathbf{F}} w(\mathbf{f}) \geq (\deg(v) - 4) - \sum_{\mathbf{f} \in \mathbf{F}} \frac{1}{2} (|\mathbf{f}|+1) = (\deg(v) - 4) - \frac{1}{2} \deg(v) \geq 0$ as $\deg(v) \geq 8$. 
\end{itemize}

Now let us consider the values of $\wei'(f)$ for $f \in F(G)$.
Again, we consider the cases.

\paragraph*{\textbf{Case $\deg(f) = 3$.}} Recall that $f$ receives charge due to \ref{R1} and possibly \ref{R2} and never loses any charge.
By \cref{3v_inc_3f} and \cref{4v_inc_3f_466}, $f$ is incident to three $5^+$-vertices or at least two $6^+$-vertices.
By \ref{R1} and \ref{R2}, $f$ receives charge at least $\min\{3\cdot \frac{1}{3},2\cdot\left(\frac{1}{3}+\frac{1}{6}\right)\}=1$,
thus the final charge is $\wei'(f) \geq \wei(f) + 1 = (3-4)+1 =0$.

\paragraph*{\textbf{Case $\deg(f) = 4$.}} Recall that $f$ never sends nor receives any charge, so $\wei'(f)=\wei(v) = 4-4=0$.

\paragraph*{\textbf{Case $\deg(f) \geq  5$.}} Recall that $f$ never receives any charge and loses charge due to \ref{R3}.
By \cref{adj_3v}, the number of 3-vertices incident to $f$ is at most $\lfloor \frac{\deg(f)}{2} \rfloor$.
So the total charge lost by $f$ is at most $\frac{1}{2} \cdot \lfloor \frac{\deg(f)}{2} \rfloor$ and the final charge is
\[
	\wei'(f)\geq \wei(f)-\frac{1}{2} \left\lfloor\frac{\deg(f)}{2}\right\rfloor\geq
	\begin{cases}
	(5-4)-\frac{1}{2}\cdot 2=0 & \text{ if } \deg(f)=5,\\
	(\deg(f)-4)-\frac{\deg(f)}4=\frac{3\deg(f)}4-4\geq\frac{18}4-4\geq0 & \text{ if } \deg(f)\geq6.
	\end{cases}
\]

Summing up, we showed that for every $x \in V(G) \cup E(G)$ it holds that $\wei'(x) \geq 0$.
As the application of discharging rules did not create any new charge, we have 
\[
0 \leq \sum_{x \in V(G) \cup F(G)} \wei'(x) = \sum_{x \in V(G) \cup F(G)} \wei(x),
\]
which contradicts \eqref{total_charge}. Thus the hypothetical counterexample to \cref{main_theorem} cannot exists. This completes the proof.

\paragraph{Acknowledgment.} The authors are sincerely grateful to Marthe Bonamy for introducing us to the problem.

\bibliographystyle{abbrv}
\bibliography{main}

\begin{thebibliography}{10}

\bibitem{Agnarsson}
G.~Agnarsson and M.~M. Halld{\'{o}}rsson.
\newblock Coloring powers of planar graphs.
\newblock {\em {SIAM} J. Discret. Math.}, 16(4):651--662, 2003.

\bibitem{Alon1992}
N.~Alon and M.~Tarsi.
\newblock Colorings and orientations of graphs.
\newblock {\em Combinatorica}, 12(2):125--134, June 1992.

\bibitem{zbMATH03601602}
K.~{Appel} and W.~{Haken}.
\newblock {Every planar map is four colorable. I: Discharging}.
\newblock {\em {Ill. J. Math.}}, 21:429--490, 1977.

\bibitem{zbMATH03601603}
K.~{Appel}, W.~{Haken}, and J.~Koch.
\newblock {Every planar map is four colorable. II: Reducibility}.
\newblock {\em {Ill. J. Math.}}, 21:491--567, 1977.

\bibitem{Borodin}
O.~V. Borodin, H.~Broersma, A.~N. Glebov, and J.~van~den Heuvel.
\newblock Stars and bunches in planar graphs. {Part II: G}eneral planar graphs
  and colourings.
\newblock CDAM Reserach Report Series, Deparment of Mathematics, London School
  of Economics \& Political Science, LSE-CDAM-2002-05, 2002.

\bibitem{DBLP:journals/cj/Calamoneri11}
T.~Calamoneri.
\newblock The {$L(h,k)$}-labelling problem: An updated survey and annotated
  bibliography.
\newblock {\em Comput. J.}, 54(8):1344--1371, 2011.

\bibitem{DBLP:journals/jct/DvorakP18}
Z.~Dvor{\'{a}}k and L.~Postle.
\newblock Correspondence coloring and its application to list-coloring planar
  graphs without cycles of lengths 4 to 8.
\newblock {\em J. Comb. Theory, Ser. {B}}, 129:38--54, 2018.

\bibitem{DVORAK2020}
Z.~Dvo\v{r}\'{a}k, D.~Kr\'{a}l', and R.~Thomas.
\newblock Three-coloring triangle-free graphs on surfaces v. coloring planar
  graphs with distant anomalies.
\newblock {\em Journal of Combinatorial Theory, Series B}, 2020.

\bibitem{zbMATH03183649}
H.~{Gr\"otzsch}.
\newblock {Zur Theorie der diskreten Gebilde, VII: Ein Dreifarbensatz für
  dreikreisfreie Netze auf der Kugel}.
\newblock {\em {Wiss. Z. Martin-Luther-Univ. Halle-Wittenberg, Math.-Naturwiss.
  Reihe}}, 8:109–120, 1959.

\bibitem{DBLP:journals/jctb/GrytczukZ20}
J.~Grytczuk and X.~Zhu.
\newblock The {Alon-Tarsi} number of a planar graph minus a matching.
\newblock {\em J. Comb. Theory, Ser. {B}}, 145:511--520, 2020.

\bibitem{Jonas}
K.~Jonas.
\newblock {\em Graph coloring analogues with a condition at distance two:
  {$L(2, 1)$}-labellings and list {$\lambda$}-labellings}.
\newblock PhD thesis, University of South Carolina, 1993.

\bibitem{Madaras}
T.~Madaras and A.~Marcinova.
\newblock On the structural result on normal plane maps.
\newblock {\em Discuss. Math. Graph Theory}, 22(2):293--303, 2002.

\bibitem{Molloy}
M.~Molloy and M.~R. Salavatipour.
\newblock A bound on the chromatic number of the square of a planar graph.
\newblock {\em J. Comb. Theory, Ser. {B}}, 94(2):189--213, 2005.

\bibitem{DBLP:journals/jct/RobertsonSST97}
N.~Robertson, D.~P. Sanders, P.~D. Seymour, and R.~Thomas.
\newblock The four-colour theorem.
\newblock {\em J. Comb. Theory, Ser. {B}}, 70(1):2--44, 1997.

\bibitem{DBLP:journals/jct/Thomassen94a}
C.~Thomassen.
\newblock Every planar graph is 5-choosable.
\newblock {\em J. Comb. Theory, Ser. {B}}, 62(1):180--181, 1994.

\bibitem{Thomassen}
C.~Thomassen.
\newblock Applications of {Tutte} cycles.
\newblock Technical report, Technical University of Denmark, 2001.

\bibitem{Heuvel}
J.~van~den Heuvel and S.~McGuinness.
\newblock Coloring the square of a planar graph.
\newblock {\em J. Graph Theory}, 42(2):110--124, 2003.

\bibitem{Wegner_conjecture}
G.~Wegner.
\newblock Graphs with given diameter and a coloring problem.
\newblock Technical report, Technical University of Dortmund, 1977.

\bibitem{Wong}
S.~A. Wong.
\newblock Colouring graphs with respect to distance.
\newblock Master's thesis, University of Waterloo, 1996.

\bibitem{J_Zhu}
J.~Zhu and Y.~Bu.
\newblock Minimum 2-distance coloring of planar graphs and channel assignment.
\newblock {\em J. Comb. Optim.}, 36(1):55--64, 2018.

\end{thebibliography}
\end{document}